\documentclass[hidelinks,onefignum,onetabnum,nohypdvips]{siamart250211}

\usepackage{tikz}
\usetikzlibrary{arrows.meta}
\usepackage{booktabs}
\usepackage{multirow}

\usepackage{lipsum}
\usepackage{amsfonts}
\usepackage{graphicx}
\usepackage{epstopdf}
\usepackage{algorithmic}
\usepackage{subfig}

\newsiamremark{remark}{Remark}
\newsiamremark{hypothesis}{Hypothesis}
\crefname{hypothesis}{Hypothesis}{Hypotheses}
\newsiamthm{claim}{Claim}
\newsiamremark{fact}{Fact}
\crefname{fact}{Fact}{Facts}

\headers{A Learnable Multigrid Framework}{Xiuping Wang and Li Luo}

\title{A Learnable Multigrid Framework via Graph Convolutions\thanks{Submitted to the editors DATE.
}}

\author{Xiuping Wang\thanks{Department of Mathematics, University of Macau, Macao SAR, China
  (\email{xiupingwang@um.edu.mo}, \email{liluo@um.edu.mo}).}
  \and Li Luo\footnotemark[2] \thanks{Corresponding author.}}

\usepackage{amsopn}

\begin{document}

\maketitle

\begin{abstract}
This paper presents a novel framework that integrates learnable graph convolutions 
with the geometric multigrid method for solving partial differential equations (PDEs).
The discretization of PDEs is first represented as a graph structure,
enabling the application of graph convolutions to enhance the multigrid performance.
By incorporating graph convolutions into the multigrid components such as smoothing and inter-grid transfer operators,
we develop a learnable multigrid that can adaptively optimize its performance based on the underlying problem characteristics.
In this framework, the graph convolutions are embedded directly within the multigrid cycle, effectively transforming the entire multigrid solver into a specialized neural network architecture, 
rather than combining a classical solver with surrogate models.
The learnable multigrid framework is lightweight in terms of parameter count 
and requires minimal training effort to achieve good performance.
Numerical experiments demonstrate the effectiveness of the proposed approach in solving some challenging problems,
showing improved convergence rates compared to traditional multigrid methods.
The generalizability of the learned parameters across different problem settings, including varying source terms, coefficients, geometries, and mesh sizes,
is also investigated with proper weight-sharing and transfer-learning strategies.

\end{abstract}

\begin{keywords}
learnable multigrid, graph convolution, linear systems of PDEs, transfer learning
\end{keywords}

\begin{MSCcodes}
65N55, 68T07, 65F10, 65F08, 65N30
\end{MSCcodes}

\section{Introduction}\label{sec:intro}
In this paper, we consider the following linear system 
\begin{equation}\label{eq:linear_system}
  A u = f,  
\end{equation}
where $A \in \mathbb{R}^{n \times n}$ is the system matrix, 
$u = [u_1, u_2, \dots, u_n]^T \in \mathbb{R}^n$ is the discrete solution vector,
and $n$ denotes the number of degrees of freedom.
The matrix $A$ typically arises from the discretization of partial differential equations (PDEs) 
using mesh-based methods such as the finite element method (FEM) \cite{babuvska2001finite,boffi2013mixed,luo2017parallel,Luo2019,russell1983finite,zienkiewicz1977finite}.
A general approach to solving the linear system \eqref{eq:linear_system} is to use iterative methods expressed as
\begin{equation}
  u^{m} = u^{m-1} + B (f - A u^{m-1}),
\end{equation}
where $B$ is an approximate inverse of $A$ and $m$ denotes the iteration step. The residual $r^{m}$ is defined as
\begin{equation}
  r^{m} = f - A u^{m}.
\end{equation}
Among various iterative methods, multigrid methods are widely used to solve large-scale problems efficiently \cite{bramble2019multigrid,hackbusch2013multi,xu2017algebraic,wesseling1995introduction}.
Multigrid methods include geometric multigrid, which exploits an explicitly constructed hierarchy of meshes and geometry-aware transfer operators, 
and algebraic multigrid, which constructs coarse-level operators and inter-grid transfers directly from the system matrix without relying on explicit geometric grids.

\smallskip
In this paper, we utilize the geometric multigrid method as our framework 
for simplicity of constructing the inter-grid transfer operators on the structured/unstructured hierarchical grids.
We make the multigrid solver learnable by introducing additional parameters and graph convolution operators in the smoothing and inter-grid transfer processes.
The required graph structure can be derived algebraically from the system matrix $A$ or geometrically from the underlying mesh used in the discretization process.
The solution $u$ and source term $f$ can be viewed as node features on the graph.

\smallskip
There have been some recent works combining multigrid methods with deep neural networks.
In 2019, He and Xu first proposed a unified framework of geometric multigrids and convolutional neural networks (CNN) \cite{he2016deep,krizhevsky2012imagenet,lecun2002gradient,simonyan2014very} named MgNet \cite{he_mgnet_2019}.
MgNet establishes a connection and partial equivalence between geometric multigrid methods and CNNs. 
It serves as an effective framework for both image classification and for recovering geometric multigrid methods to solve PDEs. 
MgNet has seen significant development in the context of image classification \cite{he_mgnet_2019, he_interpretive_2023, zhu_fv-mgnet_2023} and has recently been extended to MgNO \cite{he_mgno_2024,liu_mgno_2024} for operator learning tasks.
Other approaches utilize deep neural networks, particularly CNNs, to learn multigrid components such as smoothers and transfer operators on structured grids \cite{azulay2022multigrid,greenfeld2019learning,huang_learning_2021,katrutsa_deep_2017,lerer2024multigrid,xie2023mgcnn}.
For unstructured grids or algebraic multigrid contexts, Graph neural networks (GNNs) \cite{kipf2016semi,wu2020comprehensive} are frequently employed to enhance the solver performance. 
Some approaches focus on learning optimal parameters for classical algebraic multigrid components, such as interpolation weights or coarse grid selection \cite{luz2020learning,taghibakhshi2021optimization,yang_amgnet_2022,zou2023autoamg}. 
Others integrate GNNs directly into the iterative process to correct errors or improve smoothing on unstructured grids \cite{huang2024reducing,jiang_multigrid_2024}.
Beyond the learning approaches, evolutionary algorithms have been used to optimize multigrid solvers by employing genetic programming to construct non-standard cycles from existing components \cite{parthasarathy_automated_2026}.
Related ideas in learning-enhanced iterative methods have also been explored for nonlinear systems \cite{Gong2024,PINL}.

\smallskip
The main contributions of this paper are summarized as follows:
\begin{itemize}
	\item We present a novel learnable multigrid framework that integrates graph convolutions into the geometric multigrid method for solving PDEs.
	\item Our framework embeds graph convolutions directly within the multigrid cycle, 
	effectively transforming the entire multigrid solver into a specialized neural network architecture. 
	The graph convolutions are utilized in the smoothing and inter-grid transfer processes to enhance the solver's robustness and performance.
	\item The proposed approach is lightweight in terms of parameter count and computational cost, requiring minimal training samples and low offline training effort. 
	The learned parameters can be generalized to different problem settings, such as varying source terms, coefficients, geometries, and mesh sizes.
\end{itemize}
\smallskip
The rest of the paper is organized as follows:
In \cref{sec:preliminaries}, we provide preliminaries on the finite element method, geometric multigrid method, and graph neural networks.
\cref{sec:learnable-mg} introduces the proposed learnable multigrid framework with graph convolutions.
In \cref{sec:numerical}, we present numerical experiments to demonstrate the effectiveness of our approach.
Finally, \cref{sec:conclusion} concludes the paper and discusses future research directions.

\section{Preliminaries}\label{sec:preliminaries}
We briefly recall the graph representation of a discretized PDE by using the linear finite element method as an example, the classical geometric multigrid method, and the graph convolution used in the proposed solver.

\subsection{Graph representation}\label{subsec:graph-rep}
To use graph convolutions, we represent the degrees of freedom and their local couplings as a graph.
For a linear finite element discretization on a triangulation $\mathcal{T}_h$, the nodal basis functions $\{\phi_i\}_{i=1}^n$ give the expansion $u_h=\sum_{i=1}^n u_i\phi_i$.
The resulting stiffness matrix $A$ is sparse, and for standard scalar problems $A_{ij}\neq 0$ only when the corresponding vertices are connected by a mesh edge (or $i=j$).
This sparsity pattern naturally induces a graph.

\smallskip
For any triangulation $\mathcal{T}_h$,
we can construct a graph $\mathcal{G}_h = (\mathcal{V}_h, \mathcal{E}_h)$, where $\mathcal{V}_h$ is the set of nodes (vertices) $\{x_j\}_{j=1}^n$,
and the set of edges $\mathcal{E}_h$ is defined based on the connectivity of the mesh elements (see \cref{fig:mesh_to_graph}).
\begin{figure}[htbp]
	\centering
	\begin{tikzpicture}[scale=1.35,
		vertex/.style={circle, draw, fill=white, inner sep=2pt, font=\scriptsize},
		element_label/.style={font=\small}]
		
		\begin{scope}[xshift=-3.5cm]
			\coordinate (P1) at (0,0);
			\coordinate (P2) at (1.8,0.2);
			\coordinate (P3) at (0.6,1.5);
			\coordinate (P4) at (-0.8,1.2);
			
			\draw[thick, fill=gray!10] (P1) -- (P2) -- (P3) -- cycle;
			\draw[thick, fill=gray!10] (P1) -- (P3) -- (P4) -- cycle;
			
			\node[vertex, label=below:$x_1$] at (P1) {};
			\node[vertex, label=below:$x_2$] at (P2) {};
			\node[vertex, label=right:$x_3$] at (P3) {};
			\node[vertex, label=left:$x_4$] at (P4) {};
			
			\node[element_label] at (0.9, 0.5) {$K_1$};
			\node[element_label] at (0.0, 0.9) {$K_2$};
			\node at (0.5, -0.6) {Mesh $\mathcal{T}_h$};
		\end{scope}

		\draw[-{Stealth[length=3mm, width=2mm]}, thick] (-1.2, 0.7) -- (-0.2, 0.7);

		\begin{scope}[xshift=0.5cm]
			\coordinate (G1) at (0,0);
			\coordinate (G2) at (1.8,0.2);
			\coordinate (G3) at (0.6,1.5);
			\coordinate (G4) at (-0.8,1.2);
			
			\draw[thick, blue] (G1) -- (G2);
			\draw[thick, blue] (G2) -- (G3);
			\draw[thick, blue] (G3) -- (G4);
			\draw[thick, blue] (G4) -- (G1);
			\draw[thick, blue] (G1) -- (G3);
			
			\node[vertex] at (G1) {1};
			\node[vertex] at (G2) {2};
			\node[vertex] at (G3) {3};
			\node[vertex] at (G4) {4};
			
			\node at (0.5, -0.6) {Graph $\mathcal{G}_h=(\mathcal{V}_h, \mathcal{E}_h)$};
		\end{scope}
	\end{tikzpicture}
	\caption{Graph construction from an unstructured mesh. Left: The unstructured triangular mesh with vertices $x_i$ and elements $K_j$. Right: The corresponding graph representation where vertices become nodes and mesh edges become graph edges.}
	\label{fig:mesh_to_graph}
\end{figure}

Specifically, the node set and the edge set are defined as follows:
\begin{equation}
	\mathcal{V}_h = \{1, 2, \dots, n\}, \quad
	\mathcal{E}_h = \{(i, j) : x_i \text{ and } x_j \text{ share an edge in } \mathcal{T}_h\}.
\end{equation}
For graph convolutions we use directed edge pairs, so an undirected mesh edge is stored in both directions.
The neighbor set is $\mathcal{N}(i)=\{j:(i,j)\in\mathcal{E}_h\}$, the degree matrix is $D_g=\operatorname{diag}(|\mathcal{N}(i)|) \in \mathbb{R}^{n \times n}$, and the graph Laplacian is
\begin{equation}
	L_g=D_g-A_g, 
\end{equation}
where $A_g \in \mathbb{R}^{n \times n}$ is the graph adjacency matrix.
An edge weight function $w:\mathcal{E}_h\to\mathbb{R}$ may encode matrix entries or geometric information, depending on the chosen representation.

\smallskip
The construction can also be made algebraically from the system matrix, without using an explicit mesh:
\begin{equation}
	\mathcal{V} = \{1, \dots, n\}, \quad \mathcal{E} = \{(i, j) : A_{ij} \neq 0, i \neq j\}.
\end{equation}
The solution vector $u$ and right-hand side $f$ are then node features.
For scalar low-order finite elements, the mesh-connectivity graph and the matrix graph usually coincide.
For vector-valued PDEs, however, the geometric and algebraic viewpoints may lead to different graph representations.
In the geometric representation, each mesh vertex remains a single graph node and the multiple physical unknowns at that vertex can be stored as a multi-dimensional feature vector, for example $[u_i,v_i,\omega_i]^T$.
In the algebraic representation, each scalar degree of freedom in the global linear system can instead be treated as a separate graph node; for example, the unknowns $u_i$, $v_i$, and $\omega_i$ at the same mesh vertex may correspond to three distinct algebraic nodes.
Thus, the graph from the geometric viewpoint is often smaller with higher-dimensional node features, while the algebraic one is larger but follows the sparsity pattern of the assembled matrix directly.
For higher-order finite elements, a similar distinction appears because additional degrees of freedom may be associated with edges, faces, or elements rather than only with mesh vertices.

\smallskip
We mainly use geometric multigrid because the mesh hierarchy provides a simple way to construct the inter-grid transfer operators.
The main difference between geometric and algebraic multigrid lies in this construction: geometric multigrid builds the transfer operators from the mesh hierarchy, whereas algebraic multigrid constructs them from the algebraic structure of the system matrix.
Once the graph, smoother, prolongation, and restriction operators are fixed, the proposed framework can be applied to both geometric and algebraic multigrid.

\subsection{Classical geometric multigrid method}\label{subsec:classical-mg}
We consider the classical V-cycle geometric multigrid method to solve the linear system \eqref{eq:linear_system}.
The multigrid method consists of the following components:
\begin{itemize}
    \item Smoothing: Apply a few iterations of a smoother (e.g., Jacobi or Gauss-Seidel) to reduce high-frequency errors.
    \item Restriction: Transfer the residual from the fine grid to a coarser grid.
    \item Coarse Grid Correction: Solve the error equation on the coarse grid approximately/exactly.
    \item Prolongation: Interpolate the correction from the coarse grid back to the fine grid.
\end{itemize}
A single V-cycle iteration is summarized in \cref{alg:recursive-V-cycle}. We denote the number of levels by $J$,
with level $\ell = 1$ being the finest grid and level $\ell = J$ being the coarsest grid.
\begin{algorithm}[htbp]
\caption{V-cycle Multigrid Method: $u = \text{MG}(A_\ell, f, u, \ell)$}
\label{alg:recursive-V-cycle}
\begin{algorithmic}
\IF{$\ell = J$ }
	\STATE{Solve $A_J u = f$ exactly or approximately}
	\RETURN $u$
\ELSE
	\STATE{Pre-smoothing:} 
	\FOR{$j = 1$ to $\nu_\ell$}
	\STATE{$u \leftarrow u + B_\ell (f - A_\ell u)$}
	\ENDFOR
	\STATE{Restrict residual: $r_c = R_\ell (f - A_\ell u)$}
	\STATE{Initialize coarse correction: $e_c = 0$}
	\STATE{Recursive call: $e_c \leftarrow \text{MG}(A_{\ell+1}, r_c, e_c, \ell+1)$}
	\STATE{Prolongate correction: $e = P_\ell e_c$}
	\STATE{Update solution: $u \leftarrow u + e$}
	\STATE{Post-smoothing:}
	\FOR{$j = 1$ to $\nu_\ell$}
	\STATE{$u \leftarrow u + B_\ell (f - A_\ell u)$}
	\ENDFOR
	\RETURN $u$
\ENDIF
\end{algorithmic}
\end{algorithm}
Here, $B_\ell$ is the smoother at level $\ell$, $R_\ell$ is the restriction operator, $P_\ell$ is the prolongation operator, and $A_\ell$ is the system matrix at level $\ell$.
The subscript $c$ denotes the coarse grid quantities.
The number of smoothing steps $\nu_\ell$ can vary at different levels.

\smallskip
The efficiency of the geometric multigrid method relies on the complementary roles of the smoother and the coarse grid correction. 
Standard stationary iterative methods, such as Jacobi or Gauss-Seidel, act as smoothers that effectively damp high-frequency error components (oscillatory modes relative to the mesh size). 
However, they are inefficient at reducing low-frequency error components (smooth modes). 
The key insight of multigrid is that these smooth modes on a fine grid appear more oscillatory on a coarser grid, where they can be effectively eliminated by the smoothers. 
By recursively applying this strategy across the hierarchy, the method achieves mesh-independent convergence rates, ideally solving the system with $\mathcal{O}(n)$ computational complexity.

\smallskip
We denote the linear vector spaces associated with the finite element spaces on different levels as 
$V_{\ell}$ for $\ell = 1, 2, \dots, J$.
The inter-grid transfer operators connect the spaces $V_{\ell}$ and $V_{\ell + 1}$ associated with the fine and coarse meshes, respectively.
The prolongation operator $P_\ell: V_{\ell+1} \to V_{\ell}$ is typically defined by the natural inclusion of the coarse finite element space into the fine space.
The restriction operator $R_\ell: V_{\ell} \to V_{\ell+1}$ is usually chosen as the transpose of the prolongation operator (up to a scaling factor).
Furthermore, theoretical convergence proofs often require that the prolongation operator satisfies certain accuracy conditions, typically related to the order of the finite element space.

\smallskip
Regarding the coarse grid operators $A_\ell$ for $\ell > 1$, there are two standard approaches: geometric coarsening, where $A_\ell$ is assembled by discretizing the PDE directly on the coarse mesh $\mathcal{T}_{\ell}$, 
and Galerkin (or variational) coarsening, where the operator is defined algebraically as 
\begin{equation}
	  A_{\ell+1} = R_\ell A_\ell P_\ell.
\end{equation}
When Galerkin coarsening is employed, the choice of $R_\ell$ and $P_\ell$ is critical for the stability and convergence of the multigrid method. 
Specifically, for symmetric positive definite (SPD) problems, it is standard to choose $R_\ell = P_\ell^T$ to preserve the symmetry and positive definiteness of the coarse grid operators.

\smallskip
In the case of geometric coarsening, where $A_{\ell+1}$ is discretized directly on the coarse mesh, the relationship between the grid operators is less rigid than in the Galerkin case. 
Consequently, the restriction operator $R_\ell$ does not necessarily have to be the transpose of the prolongation operator $P_\ell$.
While this offers flexibility, it is crucial that the restriction and prolongation operators are sufficiently accurate to preserve the approximation properties required for the multigrid convergence.
Without this consistency, the coarse grid correction may fail to approximate the smooth error components accurately, leading to poor convergence or even divergence of the multigrid iteration.

\smallskip
The choice of coarsening strategies can be adapted based on the problem characteristics.
For instance, in problems with highly varying coefficients or anisotropies, the Galerkin approach may be preferred to better capture the operator's behavior across scales.
The geometric coarsening may be more straightforward for problems with uniform properties or when the mesh hierarchy is naturally defined.
However, as the Galerkin approach can lead to denser coarse grid operators, 
geometric coarsening may be favored for computational efficiency in certain scenarios.

\subsection{Graph neural networks}\label{subsec:gnns}
Graph neural networks are a class of neural networks designed to operate on graph-structured data.
They update node features by combining local information from each node and its neighbors.
In this paper, the only GNN operation we need is a graph convolution.
Let $\mathcal{G}=(\mathcal{V},\mathcal{E})$ be a graph with edge weights $w_{ij}$ and node features $X=(\boldsymbol{x}_i)_{i\in\mathcal{V}}$.
We define a graph convolution as a local map
\begin{equation}
	\Phi(X,\mathcal{G},w;\Theta)_i
	= U_\Theta\left(\boldsymbol{x}_i,
	\operatorname{AGG}_{j\in\mathcal{N}(i)}\left( M_\Theta(\boldsymbol{x}_i,\boldsymbol{x}_j,w_{ij})\right)\right), \,\, i \in \mathcal{V},
\end{equation}
where $\Phi(X,\mathcal{G},w;\Theta)_i$ is the updated feature at node $i$,
$M_\Theta$ is a learnable message function, $U_\Theta$ is a learnable update function,
$\Theta$ denotes the set of learnable parameters,
and $\operatorname{AGG}(\cdot)$ is a permutation-invariant aggregation such as summation or mean.
Thus, the output at each node depends only on its own feature, neighboring features, and edge weights.
For example, a one-layer linear graph convolution can be written as
\begin{equation}
	\Phi(X,\mathcal{G},w;\Theta)_i = W_0 \boldsymbol{x}_i + \sum_{j\in\mathcal{N}(i)} \widetilde{a}_{ij} W_1 \boldsymbol{x}_j,
\end{equation}
where $\widetilde{a}_{ij}$ is a normalized edge weight and $W_0,W_1$ are learnable weight matrices.
For PDE solvers, this local aggregation is analogous to relaxation (smoothing), such as the Jacobi iteration
\begin{equation}
	u_i^{m+1} = (1-\omega)u_i^{m} + \frac{\omega}{A_{ii}} (f_i - \sum_{j \neq i} A_{ij} u_j^{m}),
\end{equation}
where $\omega$ is the damping parameter. 
As we assume the system matrix $A$ is sparse, each update only involves the neighboring nodes defined by the nonzero entries of $A$.
The right-hand side $f$ can be incorporated as an additional feature in the GNN framework.

\smallskip
For GNNs, the loss function is typically defined based on the task at hand, such as node classification or graph regression.
For example, in a supervised learning setting, the loss is often defined as
\begin{equation}
	\mathcal{L} = \sum_{i \in \mathcal{V}_{\text{train}}} \ell(y_i, \hat{y}_i),
\end{equation}
where $\mathcal{V}_{\text{train}}$ represents the set of training nodes, $y_i$ denotes the ground truth label or target value, $\hat{y}_i$ is the predicted output from the network, and $\ell(\cdot, \cdot)$ is a metric measuring the discrepancy, such as cross-entropy or mean squared error.
Essentially, GNNs are data-driven approaches that learn to optimize the loss function through training on a dataset of graphs.

\smallskip
In the context of solving linear systems $A u = f$, a natural choice for the loss function is the residual norm:
\begin{equation}
	\mathcal{L} = \mathcal{L}(A, u, f) = \| f - A u \|_2.
\end{equation}

\smallskip
In multilevel GNNs, pooling and unpooling are analogous to restriction and prolongation in multigrid methods.
Pooling coarsens node features and graph connectivity to capture global information, while unpooling transfers coarse features back to the fine graph.
Together, these operations enable information aggregation over larger neighborhoods and redistribution of coarse-scale corrections.

\section{A learnable multigrid framework via graph convolutions}\label{sec:learnable-mg}
In this section, we propose a learnable multigrid framework that leverages graph convolutions 
to enhance the classical multigrid method.
The key idea is to integrate learnable components directly into the multigrid cycle,
transforming the entire multigrid solver into a specialized neural network architecture.
For instance, the correction is passed to the graph convolution after being applied to the smoother.
The update process for a stationary smoother can be expressed as:
\begin{align}
 \delta^j &= \theta B (f - A u^{j}), \quad j = 1, 2, \dots \\
	u^{j + 1} &= u^{j} + \eta \Phi_{B}(\delta^j, \mathcal{G}, w; \Theta_B),
\end{align}
where $\Phi_{B}$ is a graph convolution with learnable parameters $\Theta_B$, and $\theta$ and $\eta$ are learnable scaling factors.
$\Phi_{B}$ takes the node feature to be updated as its first input, followed by the graph structure $\mathcal{G}$ and edge weights $w$.
We write the learnable parameters explicitly; the same convention is used below for $\Phi_{R,\ell}$ and $\Phi_{P,\ell}$.
Additional inputs, such as node coordinates or other geometric features, can also be incorporated when needed.

\smallskip
This approach differs from the hybrid iterative methods in \cite{zhang2024blending,cui2025hybrid,hu2025hybrid}, where the classical smoother and the neural network-based smoother are applied sequentially:
\begin{equation}
	\begin{aligned}
	u^{(j, 0)} &= u^j, \quad j = 1, 2, \dots \\
	u^{(j, k)} &= u^{(j, k-1)} + B (f - A u^{(j, k-1)}), \quad k=1, \dots, \nu, \\
	u^{j + 1} &= u^{(j, \nu)} + \Psi_{B}(f - A u^{(j, \nu)}).
	\end{aligned}
\end{equation}
The hybrid iterative methods aim to apply deep neural networks as solvers to effectively reduce the error components that are not well-handled by the classical smoother.
The classical smoother is applied first to reduce high-frequency errors for one iteration or a few iterations. 
Then, the neural network-based solver is applied to further reduce the remaining error.
The $\Psi_{B}$ in the hybrid methods is typically a well-trained neural solver,
such as a DeepONet \cite{lu2019deeponet,lu2021learning} or a Fourier neural operator (FNO) \cite{li2020fourier,li2023fourier}.
In our proposed method, the one layer graph convolution is integrated directly into the smoothing step, as shown above.

\smallskip
For other smoothers like Krylov subspace methods such as GMRES \cite{saad1986gmres} and CG \cite{hestenes1952methods}, we can also combine them with graph convolutions similarly.
Take the GMRES smoother as an example.
The graph convolution is applied to generate the initial guess based on the residual for the Krylov subspace method.
The smoothing step can be expressed as:
\begin{equation}
	\begin{aligned}
		\delta_\mathrm{init} &= \theta \Phi_{B}(f - A u^j, \mathcal{G}, w; \Theta_B), \quad j = 1, 2, \dots \\
		u^{j+1} &= u^j + \eta\mathrm{GMRES}(A, f - A u^j, \delta_\mathrm{init}, v),
	\end{aligned}
\end{equation}
where $\mathrm{GMRES}(A, r, \delta_\mathrm{init}, v)$ denotes applying $v$ steps of the GMRES method to solve the linear system $A \delta = r$ with the initial guess $\delta_\mathrm{init}$.
A similar approach can be found in \cite{azulay2022multigrid}, where the initial guess is generated by a U-Net \cite{ronneberger2015u} architecture.

\smallskip
The restriction and prolongation operators are also combined with graph convolutions as follows:
\begin{align}
	r_c &= R_\ell \left(\Phi_{R,\ell}(r, \mathcal{G}, w; \Theta_{R,\ell})\right), \\
	e &= \Phi_{P,\ell}(P_\ell e_c, \mathcal{G}, w; \Theta_{P,\ell}),
\end{align}
where $\Phi_{R,\ell}$ and $\Phi_{P,\ell}$ are the graph convolutions for restriction and prolongation at level $\ell$, with learnable parameters $\Theta_{R,\ell}$ and $\Theta_{P,\ell}$, respectively.
A single V-cycle of the proposed learnable solver is summarized in \cref{alg:learnable-V-cycle}.

\begin{algorithm}[htbp]
	\caption{Learnable V-cycle: $u = \text{LMG}(A_\ell, f, u, \ell, \Theta)$}
	\label{alg:learnable-V-cycle}
	\begin{algorithmic}
	\IF{$\ell = J$ }
		\STATE{Solve $A_J u = f$ exactly or approximately}
		\RETURN $u$
	\ELSE
		\STATE{Pre-smoothing:} 
		\FOR{$j = 1$ to $\nu_\ell$}
		\STATE{$\delta = \theta_\ell B_\ell (f - A_\ell u)$}
		\STATE{$u \leftarrow u + \eta_\ell \Phi_{B, \ell}(\delta, \mathcal{G}_\ell, w_\ell; \Theta_{B,\ell})$}
		\ENDFOR
		\STATE{Compute residual: $r = f - A_\ell u$}
		\STATE{Restrict residual: $r_c = R_\ell \Phi_{R,\ell}(r, \mathcal{G}_\ell, w_\ell; \Theta_{R,\ell})$}
		\STATE{Initialize coarse correction: $e_c = 0$}
		\STATE{Recursive call: $e_c \leftarrow \text{LMG}(A_{\ell+1}, r_c, e_c, \ell+1, \Theta)$}
		\STATE{Learnable prolongation refinement: $e = \Phi_{P,\ell}( P_\ell e_c, \mathcal{G}_\ell, w_\ell; \Theta_{P,\ell})$}
		\STATE{Update solution: $u \leftarrow u + e$}
		\STATE{Post-smoothing:}
		\FOR{$j = 1$ to $\nu_\ell$}
		\STATE{$\delta = \theta_\ell B_\ell (f - A_\ell u)$}
		\STATE{$u \leftarrow u + \eta_\ell \Phi_{B, \ell}(\delta, \mathcal{G}_\ell, w_\ell; \Theta_{B,\ell})$}
		\ENDFOR
		\RETURN $u$
	\ENDIF
	\end{algorithmic}
\end{algorithm}
The learnable multigrid framework can be interpreted as a GNN architecture.
The V-cycle structure defines the overall architecture of the network.
Each layer of the network corresponds to a level in the multigrid hierarchy.
The graph convolutions are embedded into each layer as learnable components.
Besides the graph convolutions, the scaling factors $\theta_\ell$ and $\eta_\ell$ at different levels are also learnable parameters.
We summarize all the learnable parameters in the proposed framework as
\begin{equation}
	\Theta = \{ \theta_\ell, \eta_\ell, \Theta_{B,\ell}, \Theta_{R,\ell}, \Theta_{P,\ell} : \ell = 1, 2, \dots, J-1 \}.
\end{equation}
If the coarsest-level problem is solved approximately by a learnable solver rather than exactly, then the corresponding learnable parameters at level $J$ should also be included in $\Theta$.

\smallskip
In this paper, we utilize only one layer graph convolution for each $\Phi_{B,\ell}$, $\Phi_{R,\ell}$, and $\Phi_{P,\ell}$,
which can also be replaced by more advanced architectures with multiple layers and non-linear activation functions as needed.
The advanced architectures may further improve the performance with more parameters.
However, they need to be carefully trained with sufficient training data.
A well-trained model with more parameters may have better performance and generalization ability than the classical smoother and inter-grid transfer operators.
For instance, the Incomplete Lower-Upper (ILU) preconditioner is completely replaced by a well-trained GNN in \cite{yusuf2024constructing}.
We use one layer graph convolutions in this work to keep the number of learnable parameters low 
and reduce the offline training costs and difficulty.

\smallskip
We can also leave some of the components as classical ones without learnable parts.
If the classical components are optimal or effective enough,
we can keep them unchanged to reduce the number of learnable parameters and computational costs as the graph convolutions can be computationally expensive for large-scale problems.
For example, if the Jacobi smoother with an optimal damping factor is given for a problem, 
it can be used directly without learnable graph convolutions.
The proposed framework is flexible, allowing for easy integration of different types of graph convolutions and architectures.

\subsection{Training as an iterative solver}\label{subsec:training}
The training process mimics the iterative solving process and is done in an unsupervised learning manner.
Given the system matrix $A$ and the source term $f$, the training starts with an initial guess $u^0$ (e.g., a zero vector).
At the $m$-th iteration, the current solution $u^m$ is updated by applying one V-cycle of the learnable multigrid solver:
\begin{equation}
	u^{m+1} = \text{LMG}(A, f, u^m, 1, \Theta).
\end{equation}
The training aims to minimize the loss function (residual norm) by updating the learnable parameters $\Theta$ using backpropagation.
One training epoch consists of one forward V-cycle iteration of the learnable multigrid solver plus one backpropagation step for updating $\Theta$.
The training stops when the loss function is less than or equal to a predefined tolerance or reaches the maximum number of epochs.

\smallskip
The training process can be summarized in \cref{alg:training-process}.
The unsupervised training does not require labeled data, as the loss function is defined based on the residual of the linear system itself.
The model learns to minimize the residual norm directly through the multigrid iterations.
We choose the Rprop (Resilient Backpropagation) optimizer \cite{riedmiller1993rprop} for updating the parameters in our experiments,
which is a gradient-based optimization algorithm that adapts the weight updates based on the sign of the gradients.
The number of parameters in the proposed framework is relatively small,
making Rprop a suitable choice for optimization.
Compared to the widely used Adam optimizer \cite{kingma2014adam}, 
Rprop often shows better performance in our experiments for training the learnable multigrid solver.
\begin{algorithm}[htbp]
\caption{Training Process}
\label{alg:training-process}
\begin{algorithmic}
\STATE{Given: System matrix $A$, right-hand side vector $f$, initial parameters $\Theta$, optimizer, max epochs $N_{max}$, tolerance $tol$}
\STATE{Initialize $u^0 = 0$}
\FOR{$m = 1$ to $N_{max}$}
	\STATE{Forward Pass:}
	\STATE{\quad $u^m = \text{LMG}(A, f, u^{m-1}, 1, \Theta)$}
	\STATE{Compute Loss:}
	\STATE{\quad $\mathcal{L} = \| f - A u^m \|_2$}
	\IF{$\mathcal{L} \leq tol$}
		\STATE{Break}
	\ENDIF
	\STATE{Backward Pass:}
	\STATE{\quad Compute gradients $\nabla_\Theta \mathcal{L}$ via backpropagation}
	\STATE{Parameter Update:}
	\STATE{\quad Update $\Theta$ using the optimizer}
\ENDFOR
\end{algorithmic}
\end{algorithm}

\smallskip
The trained model with parameters $\Theta$ can then be used as an iterative solver or a preconditioner for inference.
We expect the best performance for the inference process when the model follows the same V-cycle structure as in the training phase.
The model can also be different from the training phase, such as using more smoothing steps or more levels in the multigrid hierarchy.

\subsection{Graph convolution operators}\label{subsec:gcn-ops}
The graph convolution operators $\Phi_{B,\ell}$, $\Phi_{R,\ell}$, and $\Phi_{P,\ell}$ can be chosen from various GNN architectures.
In this paper, we utilize the SAGEConv \cite{hamilton2017inductive} as the graph convolution operator.
The SAGEConv operator aggregates neighbor information using mean, LSTM \cite{hochreiter1997long}, or max pooling methods.
In this work, we use the mean pooling method for aggregation.
The update rule of the SAGEConv for node $i$ is given by:
\begin{equation}
	\boldsymbol{x}_i = W_1 \cdot \boldsymbol{x}_i + W_2 \cdot \text{mean}_{j \in \mathcal{N}(i)} \boldsymbol{x}_j,
\end{equation}
where $\boldsymbol{x}_i$ is the features of node $i$, and $W_1, W_2$ are learnable weight matrices.

\smallskip
There exist various graph convolution operators, such as ChebConv \cite{defferrard2016convolutional}, GraphConv \cite{morris2019weisfeiler}, etc.
The choice of graph convolution can be flexible and adapted to different problems.
As we aim for solving PDEs rather than classification tasks,
we choose the relatively simple graph convolution operators and recommend the SAGEConv.
We also find that the ChebConv operator also shows good performance in our experiments,
but it requires more computational resources due to the higher complexity of the operator.

\subsection{Spectral analysis of the learnable smoother}
\label{subsec:spectral-analysis}
In this subsection, we analyze the learnable smoother from a spectral perspective
when the SAGEConv operator with mean pooling and the damped Jacobi method are used.
For simplicity, we consider the scalar case where the feature dimension is $d = 1$.
In this case, the weight matrices $W_1$ and $W_2$ reduce to scalars, which we denote by $w_1$ and $w_2$, respectively.
For vector-valued features with $d > 1$, the analysis can be extended by considering the spectral properties of the weight matrices and their interactions with the graph structure.

\smallskip
The SAGEConv applied to the update $\delta \in \mathbb{R}^n$ from the classical smoother can be written in matrix form as
\begin{equation}
	\label{eq:graph-conv-matrix}
	\Phi_B(\delta) = w_1\, \delta + w_2\, D_g^{-1} A_g\, \delta = \left(w_1 I + w_2 D_g^{-1} A_g\right) \delta.
\end{equation}
The learnable smoothing iteration becomes
\begin{equation}
	\label{eq:learnable-smoother-matrix}
	u^{m+1} = u^{m} + \eta\theta\, \left(w_1 I + w_2 D_g^{-1} A_g\right) D_A^{-1}\,(f - Au^m),
\end{equation}
where $D_A$ is the diagonal of the system matrix $A$.
The corresponding error propagation matrix is
\begin{equation}
	\label{eq:error-propagation}
	S = I - \eta\theta\, \left(w_1 I + w_2 D_g^{-1} A_g\right) D_A^{-1} A.
\end{equation}

\begin{remark}[Recovery of classical methods]
	By setting $w_1 = 1$, $w_2 = 0$, $\theta = \omega$, and $\eta = 1$,
	the error propagation matrix \cref{eq:error-propagation} reduces to 
	$S = I - \omega D_A^{-1} A$, 
	which is the iteration matrix of the damped Jacobi method with damping factor $\omega$.
	Hence, the learnable smoother generalizes the classical Jacobi smoother.
\end{remark}

\begin{remark}[Connection to polynomial smoothers]
	\label{rmk:polynomial-smoother}
By noticing that $D_g^{-1}A_g = I - D_g^{-1}L_g$, 
we can rewrite the effective preconditioner as
\begin{equation}
	\left((w_1 + w_2) I - w_2\, D_g^{-1}L_g\right) D_A^{-1},
\end{equation}
which is a degree-1 polynomial in the normalized graph Laplacian $D_g^{-1}L_g$ applied to the classical smoother $B = D_A^{-1}$.
This is analogous to classical polynomial smoothers, such as Chebyshev smoothers \cite{hackbusch2013multi}, where the smoother applies a polynomial in the operator to accelerate error reduction.
The learnable parameters $w_1$ and $w_2$ play the role of optimizable polynomial coefficients.
Furthermore, using a ChebConv operator of order $K$ in place of SAGEConv would yield a degree-$K$ polynomial smoother, 
providing a spectral interpretation for the choice of graph convolution architecture.
\end{remark}

\smallskip
The spectral analysis reveals that for the scalar case, the SAGEConv-based learnable smoother introduces a frequency-dependent damping.
Consider the case where the eigenvectors of $D_A^{-1} A$ and $D_g^{-1}A_g$ are known.
Let $\{\lambda_k\}$ and $\{\hat{\lambda}_k\}$ denote the eigenvalues of $D_A^{-1} A$ and $D_g^{-1}A_g$, respectively.
When both operators share a common eigenbasis (e.g., for the Poisson equation on a uniform grid, where $D_A^{-1}A$ and $D_g^{-1}A_g$ are simultaneously diagonalizable by the discrete Fourier modes),
the eigenvalues of the error propagation matrix $S$ are
\begin{equation}
	\label{eq:eigenvalue-S}
	\sigma_k = 1 - \eta\theta(w_1 + w_2\hat{\lambda}_k)\lambda_k.
\end{equation}
For the classical damped Jacobi method ($w_1 = 1$, $w_2 = 0$), these reduce to 
\begin{equation}\label{eq:eigenvalue-jacobi}
	\sigma_k = 1 - \omega\lambda_k,
\end{equation}
and the smoothing performance is controlled by a single parameter $\omega$.

\smallskip
With $w_2 \neq 0$, the factor $(w_1 + w_2\hat{\lambda}_k)$ introduces a frequency-dependent damping.
Since the eigenvalues $\hat{\lambda}_k$ of $D_g^{-1}A_g$ vary across modes, 
different eigenmodes will receive different effective damping factors.
While in the classical Jacobi method \cref{eq:eigenvalue-jacobi}, the damping is uniform across all modes.
This is the main reason why the learnable smoother can potentially achieve better performance than the classical smoother.

\begin{proposition}
	\label{prop:optimal-smoothing}
	Let $S_\omega = I - \omega D_A^{-1}A$ be the error propagation matrix of the damped Jacobi method with optimal damping factor $\omega^*$.
	Let $S_{w_1, w_2} = I - \eta\theta(w_1 I + w_2 D_g^{-1}A_g) D_A^{-1}A$ be the error propagation matrix of the learnable smoother.
	Then
	\begin{equation}
		\min_{w_1, w_2, \eta, \theta} \rho(S_{w_1, w_2}) \le \min_{\omega} \rho(S_\omega) = \rho(S_{\omega^*}),
	\end{equation}
	where $\rho(\cdot)$ denotes the spectral radius.
	That is, the optimal learnable smoother achieves a smoothing factor at least as good as the optimal damped Jacobi method.
\end{proposition}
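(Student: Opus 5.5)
The plan is a direct embedding argument: the family of learnable smoothers contains the damped Jacobi family as a subfamily, so a minimum over the larger set can be no larger than the minimum over the smaller one. No spectral structure such as a common eigenbasis is needed; the claim follows from comparing the two feasible sets.

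First, I would fix an arbitrary damping factor $\omega$ and exhibit learnable parameters reproducing $S_\omega$. Following the remark on recovery of classical methods, I would take $w_1 = 1$, $w_2 = 0$, $\theta = \omega$ and $\eta = 1$. Substituting into the definition of $S_{w_1,w_2}$ gives $S_{1,0} = I - \omega D_A^{-1}A = S_\omega$ as matrices, so the two spectral radii agree. Hence for every $\omega$,
\begin{equation*}
	\inf_{w_1, w_2, \eta, \theta} \rho(S_{w_1, w_2}) \le \rho(S_\omega).
\end{equation*}
Taking the infimum over $\omega$, and using that the statement assumes the optimum is attained at $\omega^*$, yields the inequality with $\rho(S_{\omega^*})$ on the right.

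The one point needing care is that the left-hand side is written as a minimum rather than an infimum. I would handle this in one of two ways. The first option is to read $\min$ as $\inf$, noting that the parameter point $(1,0,\omega^*,1)$ already witnesses the bound $\rho(S_{\omega^*})$. The second option is to justify attainment directly. Only the products $c_1 = \eta\theta w_1$ and $c_2 = \eta\theta w_2$ enter the matrix $S_{w_1,w_2}$, so the problem reduces to minimizing $\rho\big(I - (c_1 I + c_2 D_g^{-1}A_g)D_A^{-1}A\big)$ over $(c_1,c_2)\in\mathbb{R}^2$. This objective is continuous in $(c_1,c_2)$ because eigenvalues depend continuously on matrix entries. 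For attainment one can restrict to a compact sublevel set $\{\rho \le \rho(S_{\omega^*})\}$; boundedness of this set may fail in degenerate cases, so I would simply state the result with $\inf$ if needed. This is the only potential obstacle, and it does not affect the inequality itself.

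Finally, I would remark that the inequality can be strict. When $D_A^{-1}A$ and $D_g^{-1}A_g$ share an eigenbasis, the eigenvalues \cref{eq:eigenvalue-S} allow the mode-dependent factor $w_1 + w_2\hat\lambda_k$ to reduce the largest $|\sigma_k|$ below what a single scalar $\omega$ achieves. This illustrates the frequency-dependent damping discussed above, but it is not needed for the proof.
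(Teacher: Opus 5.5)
Your proposal is correct and matches the paper's proof: both choose $w_1=1$, $w_2=0$, $\theta=\omega$, $\eta=1$ to recover $S_\omega$ exactly, and then conclude because a minimum over a larger parameter set cannot exceed the minimum over the embedded Jacobi family. Your added caveat, that the left-hand $\min$ may need to be read as an $\inf$ since attainment is not needed for the inequality, is a sensible refinement that the paper leaves implicit.
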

\begin{proof}
	The classical damped Jacobi iteration matrix $S_\omega = I - \omega D_A^{-1}A$ is recovered by setting $w_1 = 1$, $w_2 = 0$, $\theta = \omega$, and $\eta = 1$ in $S_{w_1, w_2}$.
	Since the parameter space of the learnable smoother $(w_1, w_2, \eta, \theta)$ strictly contains the space of the damped Jacobi method (parameterized by $\omega$ alone), the minimum over the larger space cannot exceed the minimum over the smaller space.
\end{proof}

\begin{remark}
\Cref{prop:optimal-smoothing} guarantees that introducing the graph convolution into the smoother cannot degrade the best achievable smoothing factor.
In practice, the additional degree of freedom provided by $w_2$ allows the optimizer to learn frequency-dependent damping,
which can lead to strict improvement over the classical Jacobi smoother, particularly for problems where a uniform damping factor is suboptimal.
\end{remark}

\subsection{Initialization of learnable parameters}\label{subsec:init}
The initialization of the learnable parameters $\Theta$ can significantly impact the training process and convergence of the learnable multigrid solver.
We aim to improve the classical multigrid method by integrating learnable components.
Therefore, we do not want to deviate too far from the classical components at the beginning of the training.

\smallskip
For the graph convolution operators $\Phi_{B,\ell}$, $\Phi_{R,\ell}$, and $\Phi_{P,\ell}$,
we initialize the weight matrices to be identity mappings.
For instance, for the SAGEConv operator, we set
\begin{equation}
	W_1^{(\ell)} = I_d, \quad W_2^{(\ell)} = 0,
\end{equation}
where $I_d$ is the identity matrix of the feature dimension $d$.

\smallskip
For the scaling factors $\theta_\ell$ and $\eta_\ell$ at different levels,
we initialize them to the same values used in the classical multigrid method.
For example, if the Jacobi smoother with a damping factor $\omega$ is used,
we set $\theta_\ell = \omega$ and $\eta_\ell = 1$.

\smallskip
This initialization strategy ensures that the learnable multigrid solver starts identical to the classical multigrid method.
At the first iteration of training, the solver behaves exactly like the classical multigrid method.
From the spectral analysis in \cref{subsec:spectral-analysis}, this identity initialization corresponds to recovering the classical damped Jacobi smoother
(cf.\ \cref{eq:error-propagation} with $w_1 = 1$, $w_2 = 0$).
Because the classical multigrid is already a convergent iteration with $\rho(S) < 1$,
the training starts from a well-behaved point in the parameter space
rather than from a random, potentially divergent configuration.
Subsequent gradient updates then explore the enlarged parameter space to further reduce the spectral radius of $S$, as guaranteed by \cref{prop:optimal-smoothing}.

\section{Numerical experiments}
\label{sec:numerical}
In this section, we present numerical experiments to demonstrate the performance of the proposed learnable multigrid solver.
The mesh size of the finest grid is denoted as $h_{\min}$ in the following sections.
We set the maximum number of epochs as 100. 
The training stops when the loss function is less than or equal to the tolerance 
\[
tol = \frac{10^{-6}}{\mathcal{L}^0}
\] 
or reaches the maximum number of epochs. 
The learning rate is set to 0.01 for all the experiments.
The coarsest-level problem is solved approximately in all the experiments.
All the numerical experiments are implemented in Python using PyTorch \cite{paszke2019pytorch} and PyTorch Geometric \cite{fey2019fast} with double precision.
The experiments are conducted on a Macbook Pro with a 24GB Apple M4 Pro chip.

\subsection{Darcy equation}\label{subsec:darcy}
We first consider the following Darcy equation for single-phase flow in porous media as our model problem:
\begin{equation}\label{eq:darcy}
	\begin{aligned}
		&\nabla \cdot \boldsymbol{u} = f, \quad \text{in } \Omega, \\
		&\boldsymbol{u} = -\boldsymbol{K} \nabla p, \quad \text{in } \Omega, \\
		&p = g_D, \quad \text{on } \Gamma_D, \\
		&\boldsymbol{u} \cdot \boldsymbol{n} = g_N, \quad \text{on } \Gamma_N,
	\end{aligned}
\end{equation}
where $\boldsymbol{K}$ is the conductivity tensor, $p$ is the pressure, 
$\boldsymbol{u}$ is the Darcy velocity, $f$ is the source term, and $\Omega$ is the computational domain.
The boundary $\partial \Omega$ is divided into two disjoint parts: $\Gamma_D$ and $\Gamma_N$,
where Dirichlet and Neumann boundary conditions are applied, respectively.
We consider the staggered grid finite difference method on a uniform grid to discretize the Darcy equation \cref{eq:darcy}.
The mesh size of the finest grid is set as $h_{\min} = 1/100$.
The number of levels $J$ is set as 7.

\smallskip
The graph structure for the graph convolutions in this example is purely built from the algebraic structure of the system matrix $A_\ell$.
This means that the graph $\mathcal{G}_\ell$ at level $\ell$ has nodes corresponding to the degrees of freedom at that level, and edges corresponding to the nonzero entries in $A_\ell$.

\smallskip
Generally, the conductivity $\boldsymbol{K}$ is highly heterogeneous in porous media applications.
Thus, we consider the Galerkin coarsening to construct the coarse grid operators $A_\ell$ for $\ell < J$.
All the different levels share the same restriction and prolongation graph convolutions, i.e.:
\begin{equation}
	\Phi_{R,\ell} = \Phi_{R}, \quad \Phi_{P,\ell} = \Phi_{P}, \quad \ell = 1, 2, \dots, J-1.
\end{equation}
We set the damping factor as $\omega = 2/3$ and the number of smoothing steps as $\nu_\ell = 4$.
We build two variants of the learnable multigrid solver, both using piecewise constant interpolation for the prolongation operator $P_\ell$ and its transpose for the restriction operator $R_\ell$:
\begin{itemize}
	\item \textbf{LMG-RP}: Only the restriction and prolongation graph convolutions $\Phi_R$ and $\Phi_P$ are trained. The smoother is kept as the classical damped Jacobi method without learnable graph convolutions.
	\item \textbf{LMG-RPS}: The restriction and prolongation graph convolutions $\Phi_R$, $\Phi_P$, the smoother graph convolutions $\Phi_{B,\ell}$ and the scaling factors $\eta_\ell$ at each level are trained. The damping coefficient $\theta_\ell$ is fixed during the training process.
\end{itemize}
For comparison, we also implement a classical multigrid solver with bilinear interpolation.
Essentially, we aim to improve the convergence of the piecewise constant multigrid solver through learned graph convolutions
and compare both trained variants with the classical bilinear interpolation version.

\smallskip
The computational domain is set as $[0, 1]^2$. The source term $f$ is set as zero.
The boundary condition is set as $g_D = 1$ on the left boundary, $g_D = 0$ on the right boundary, and $g_N = 0$ on the top and bottom boundaries.
The conductivity field $\boldsymbol{K}$ is set as follows:
\begin{equation}
	\label{cond:training}
	\boldsymbol{K}(x,y) = \begin{bmatrix}
		k(x,y) & 0 \\
		0 & k(x,y)
	\end{bmatrix}, \quad
	k(x,y) = \begin{cases}
		10^{-6}, & \text{if } 0.4 \leq x, y \leq 0.6, \\
		1, & \text{otherwise}.
	\end{cases}
\end{equation}
\begin{figure}[htbp]
\centering
\subfloat[Training conductivity field]{\includegraphics[width=0.45\textwidth]{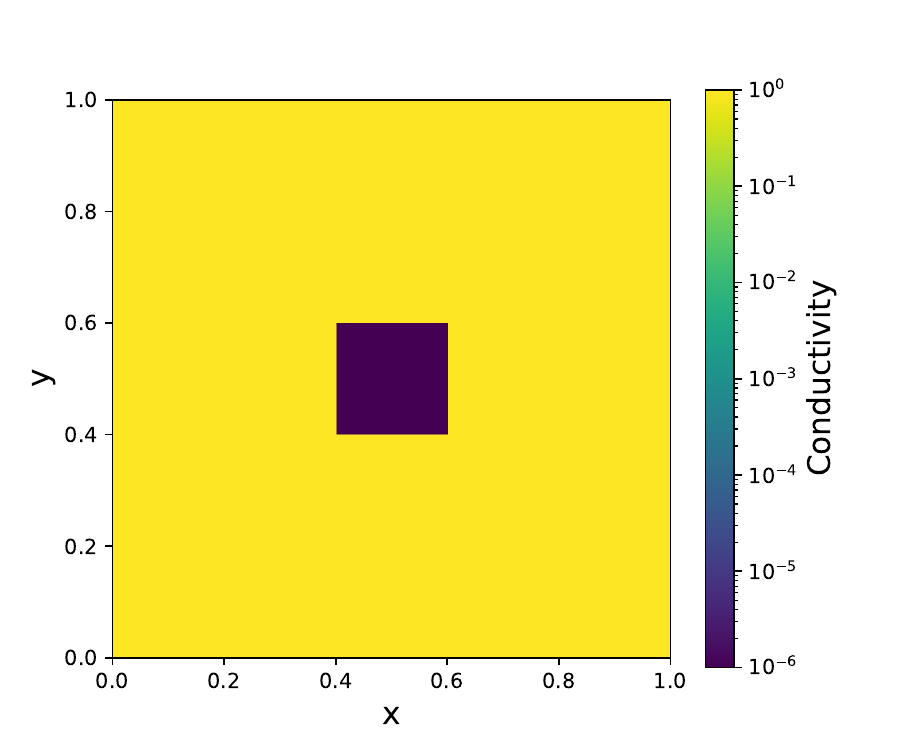}\label{fig:darcy-conductivity}}
\hfill
\subfloat[Training loss evolution]{\includegraphics[width=0.45\textwidth]{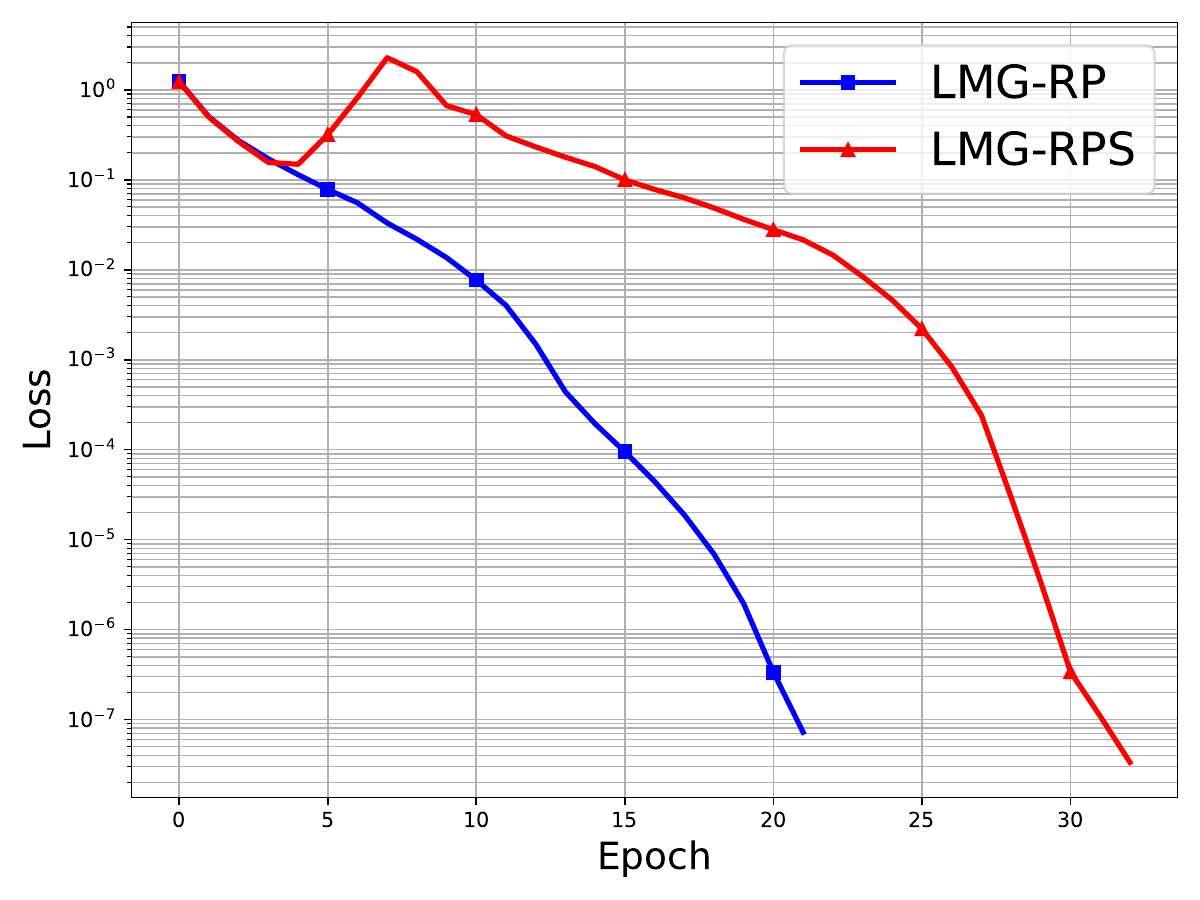}\label{fig:darcy-loss}}
\caption{The training conductivity field $\boldsymbol{K}$ and the loss evolution during training for the Darcy equation \cref{eq:darcy}.}
\label{fig:darcy-training}
\end{figure}
The conductivity field is visualized in \cref{fig:darcy-conductivity}.

\smallskip
The loss evolution during the training process for both LMG-RP and LMG-RPS is shown in \cref{fig:darcy-loss}.
LMG-RP converges in about 21 epochs and is more stable because it trains only the restriction and prolongation graph convolutions.
LMG-RPS has more flexibility but also more parameters, leading to a longer training process and a mild initial loss fluctuation.
Both variants significantly improve the piecewise constant multigrid baseline.

\begin{remark}
The purpose of this experiment is to demonstrate how learnable components can improve the convergence when the multigrid method is not optimal for a given problem.
The classical multigrid with bilinear interpolation already performs very well:
for the training conductivity field \cref{cond:training}, it converges in only about 10 iterations.
This rapid convergence means that there are not enough training epochs for a learnable model built on top of the bilinear interpolation to discover meaningfully better weights,
since the solver reaches the tolerance before the parameters can be sufficiently optimized.
Consequently, the learned model based on bilinear interpolation exhibits poor generalization ability.
In contrast, the piecewise constant interpolation converges slowly and thus provides enough training epochs for the graph convolutions to learn effective corrections.
As shown in the subsequent experiments, both LMG-RP and LMG-RPS built on piecewise constant interpolation can outperform the classical multigrid solver with bilinear interpolation.
\end{remark}

\smallskip
We then compare LMG-RP, LMG-RPS, and classical multigrid with bilinear interpolation on three conductivity fields shown in \cref{fig:darcy-test-conductivity}; a random initial guess is used in all tests.
\begin{figure}
\centering
\subfloat[Testing conductivity field I]{\includegraphics[width=0.31\textwidth]{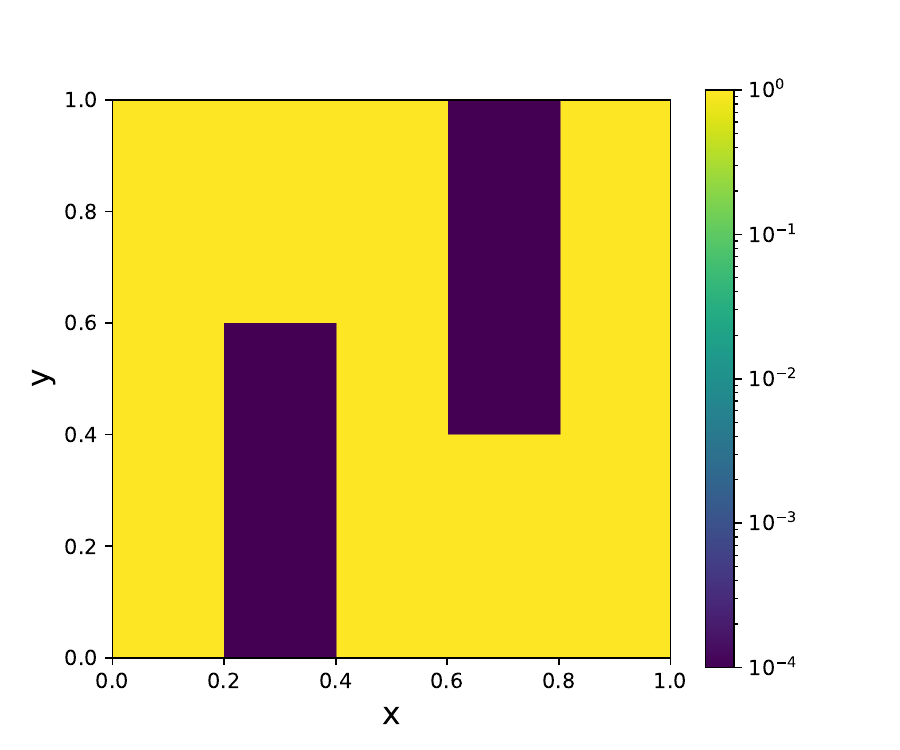}}
\hfill
\subfloat[Testing conductivity field II]{\includegraphics[width=0.31\textwidth]{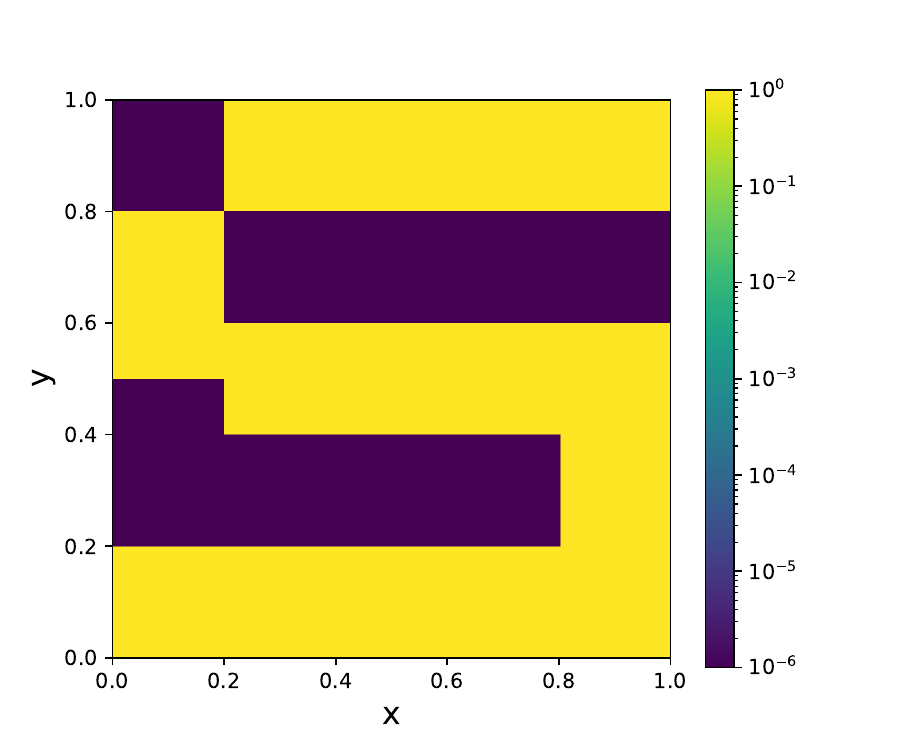}}
\hfill
\subfloat[Testing conductivity field III]{\includegraphics[width=0.31\textwidth]{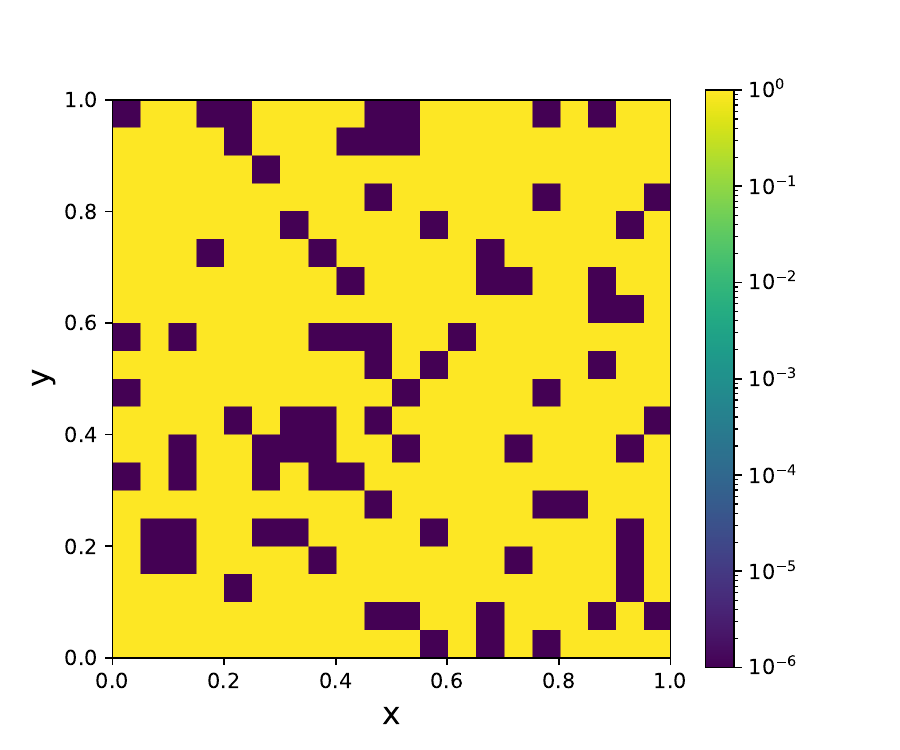}}
\\
\subfloat[Solution - field I]{\includegraphics[width=0.31\textwidth]{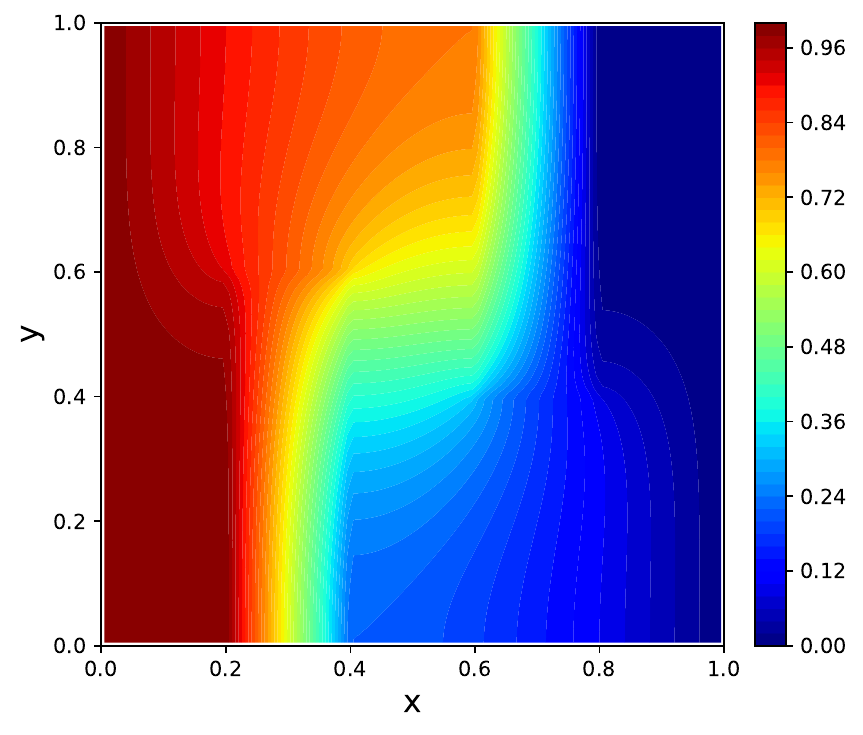}}
\hfill
\subfloat[Solution - field II]{\includegraphics[width=0.31\textwidth]{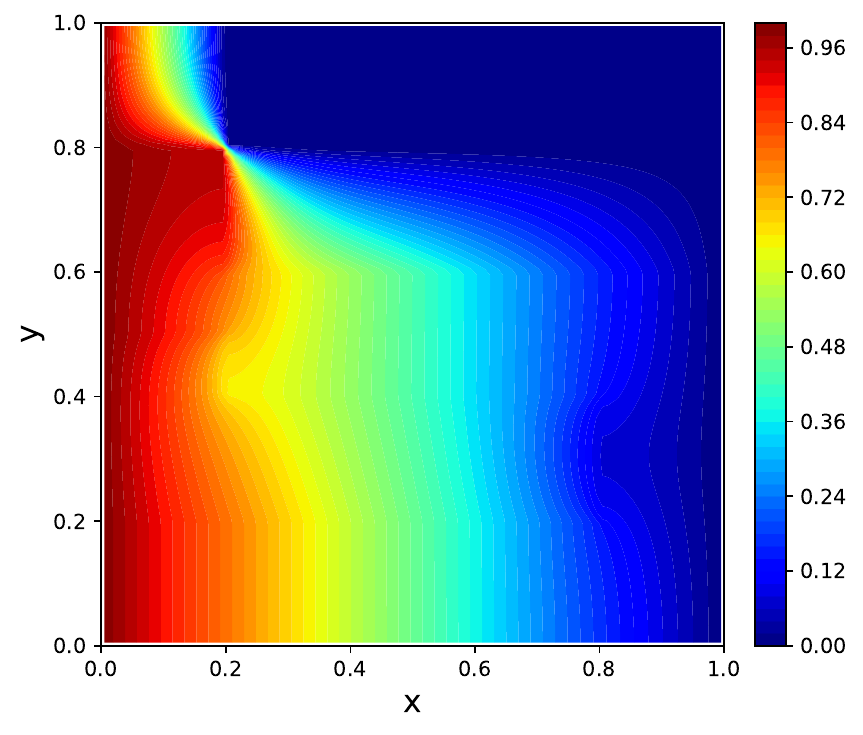}}
\hfill
\subfloat[Solution - field III]{\includegraphics[width=0.31\textwidth]{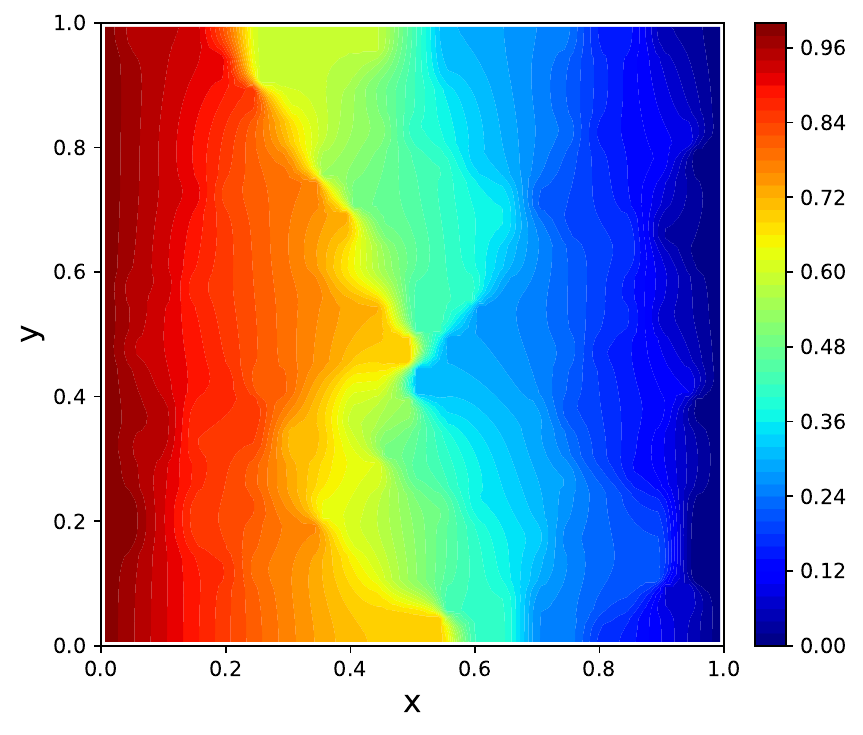}}
\caption{The conductivity fields for testing the generalizability of the learnable multigrid solver for the Darcy equation \cref{eq:darcy} and the corresponding numerical solutions.}
\label{fig:darcy-test-conductivity}
\end{figure}
Field I contains two low-conductivity regions with contrast $10^4$.
Field II has two high-conductivity channels with contrast $10^6$, producing a sharp interface in the solution.
Field III is the Durlofsky field \cite{durlofsky1994accuracy}, interpolated from $40\times40$ to $80\times80$ for testing the generalization ability to different mesh sizes.
The boundary conditions and source term are the same as in training.

\subsubsection{Performance and discussion}
We summarize the results in \cref{tab:darcy-results}.

\begin{table}[htbp]
\centering
\small
\caption{Iteration counts (Iter) and wall clock time (Time, in seconds) for the Darcy equation \cref{eq:darcy} with different conductivity fields.}
\label{tab:darcy-results}
\begin{tabular}{c cc c@{\hspace{3pt}}c c@{\hspace{3pt}}c}
\toprule
 & \multicolumn{2}{c}{Classical MG} & \multicolumn{2}{c}{LMG-RPS} & \multicolumn{2}{c}{LMG-RP} \\
\cmidrule(lr){2-3} \cmidrule(lr){4-5} \cmidrule(lr){6-7}
Field & Iter & Time & Iter & Time & Iter & Time \\ 
\midrule
I   & 14 & 0.18 & 9  & 0.12 & 13 & 0.12 \\
II  & 38 & 0.45 & 17 & 0.24 & 13 & 0.12 \\
III & 30 & 0.25 & 18 & 0.15 & 15 & 0.09 \\
\bottomrule
\end{tabular}
\end{table}

\begin{figure}[htbp]
\centering
\subfloat[Field I]{\includegraphics[width=0.31\textwidth]{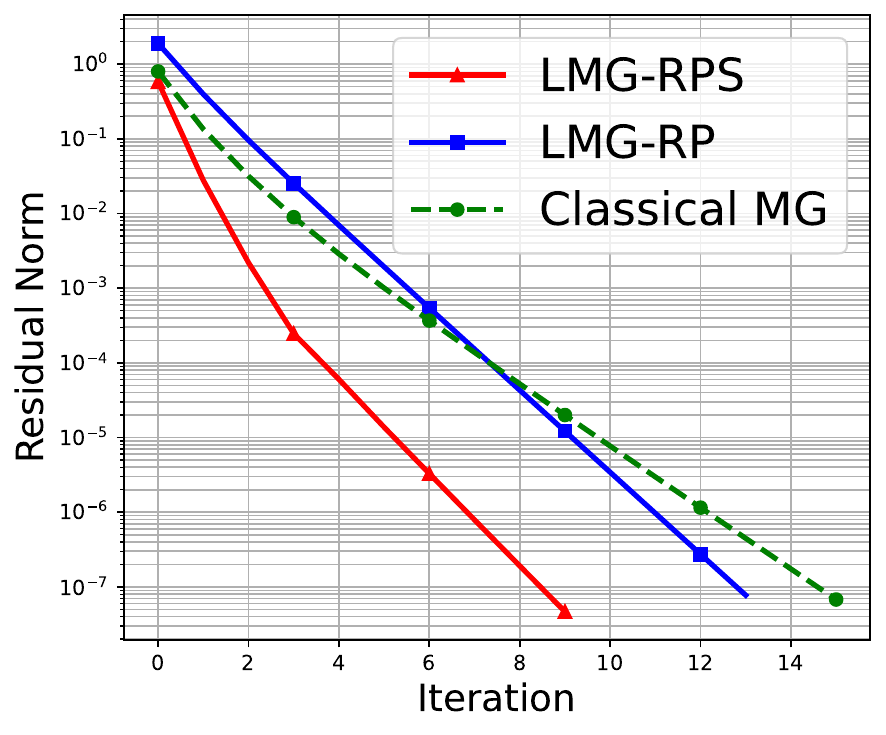}}
\subfloat[Field II]{\includegraphics[width=0.31\textwidth]{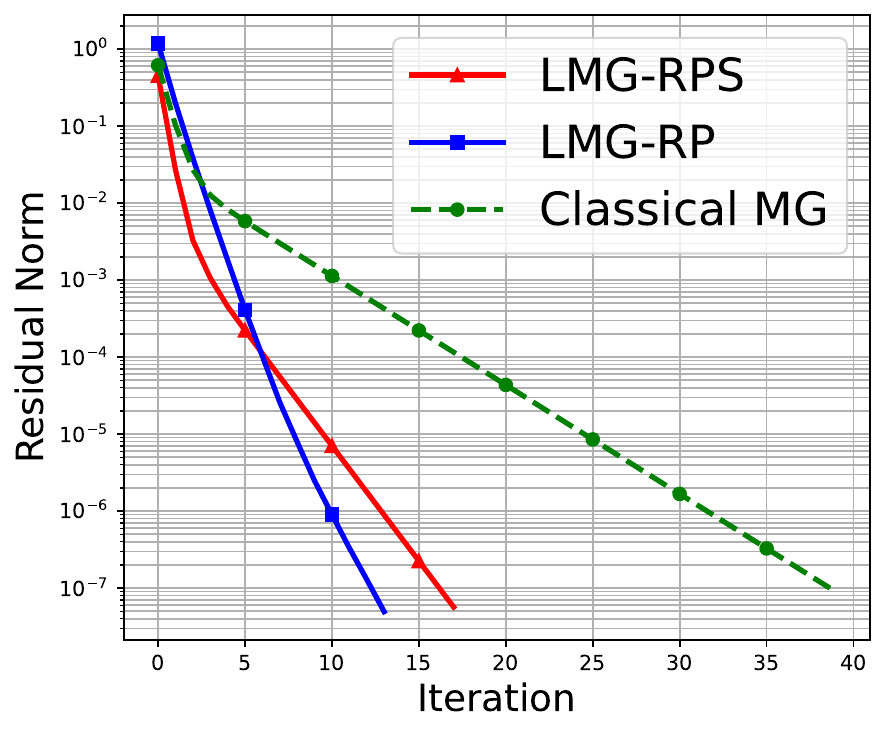}}
\subfloat[Field III]{\includegraphics[width=0.31\textwidth]{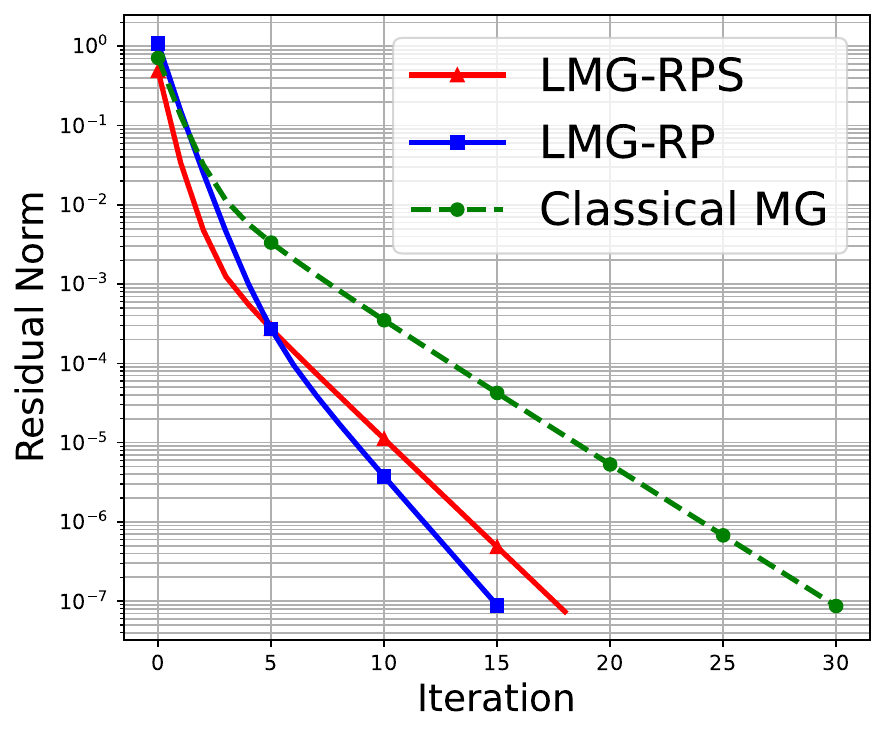}}
\caption{Residual evolution profile for the Darcy equation \cref{eq:darcy} with the three testing conductivity fields.}
\label{fig:darcy-residual-evolution}
\end{figure}
Both learnable solvers outperform classical multigrid for all testing fields, indicating generalization beyond the training coefficient.
Their V-cycles are also cheaper because the piecewise constant interpolation leads to sparser coarse operators than bilinear interpolation.
LMG-RPS is best for Field I, while LMG-RP is faster for Fields II--III and has nearly constant iteration counts across all tests.
Since the Jacobi smoother is already effective here, LMG-RP is the more practical choice.

\smallskip
We also plot the frequency evolution of the residual for the classical multigrid solver and the trained learnable multigrid solver in \cref{fig:darcy-error-frequency}.
\begin{figure}
\centering
\subfloat[Classical MG - Field I]{\includegraphics[width=0.31\textwidth]{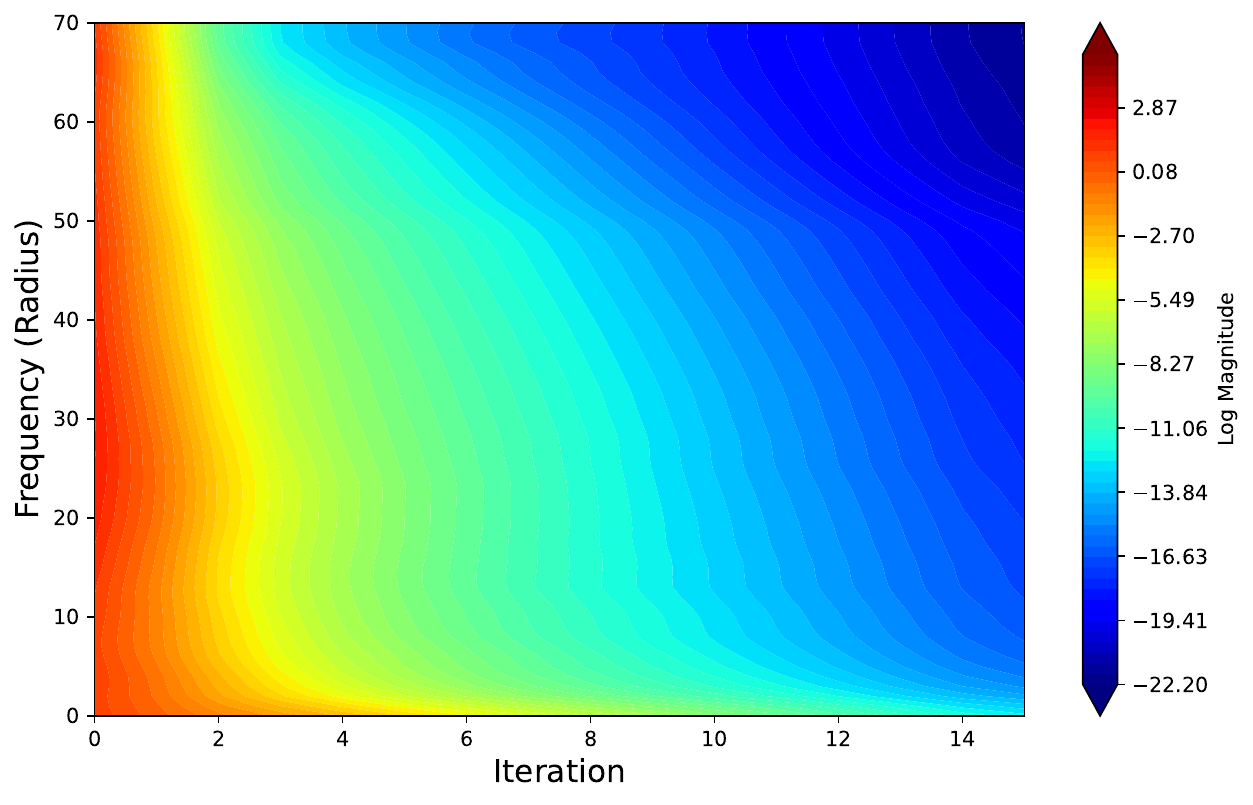}}
\hfill
\subfloat[Classical MG - Field II]{\includegraphics[width=0.31\textwidth]{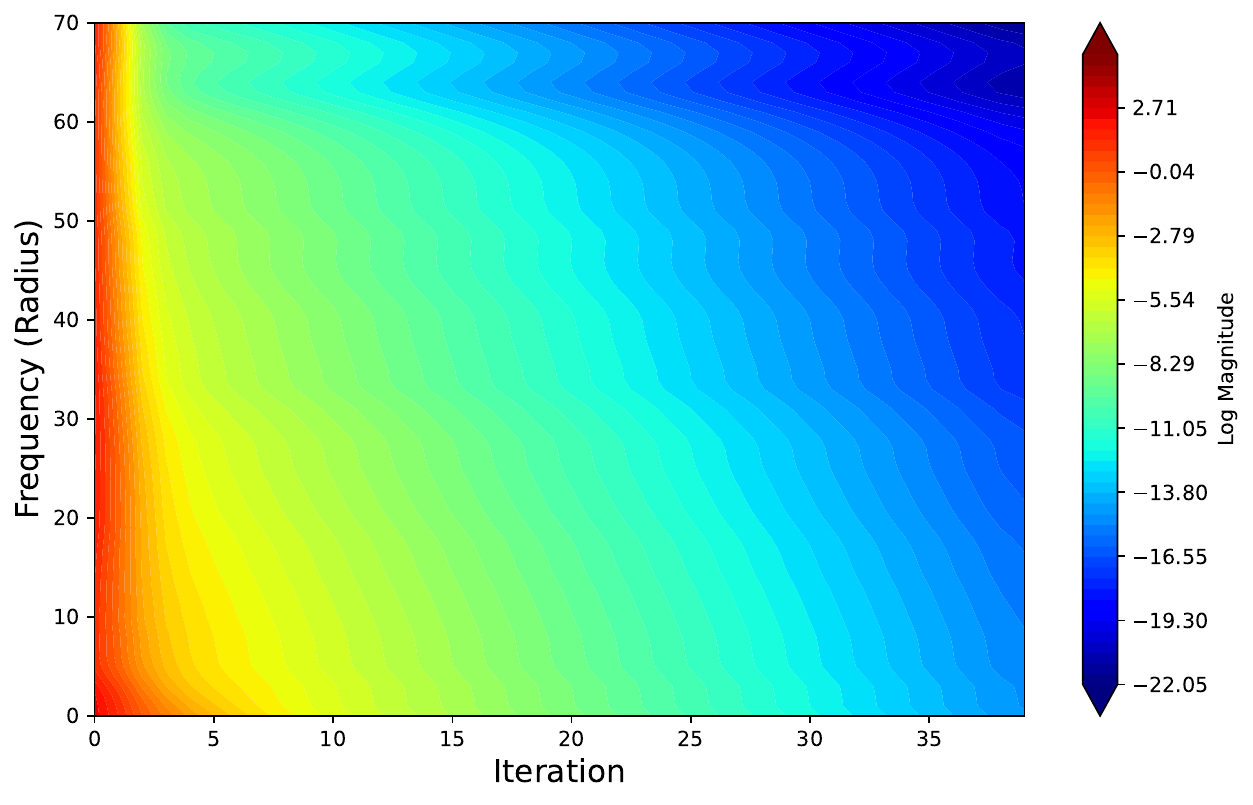}}
\hfill
\subfloat[Classical MG - Field III]{\includegraphics[width=0.31\textwidth]{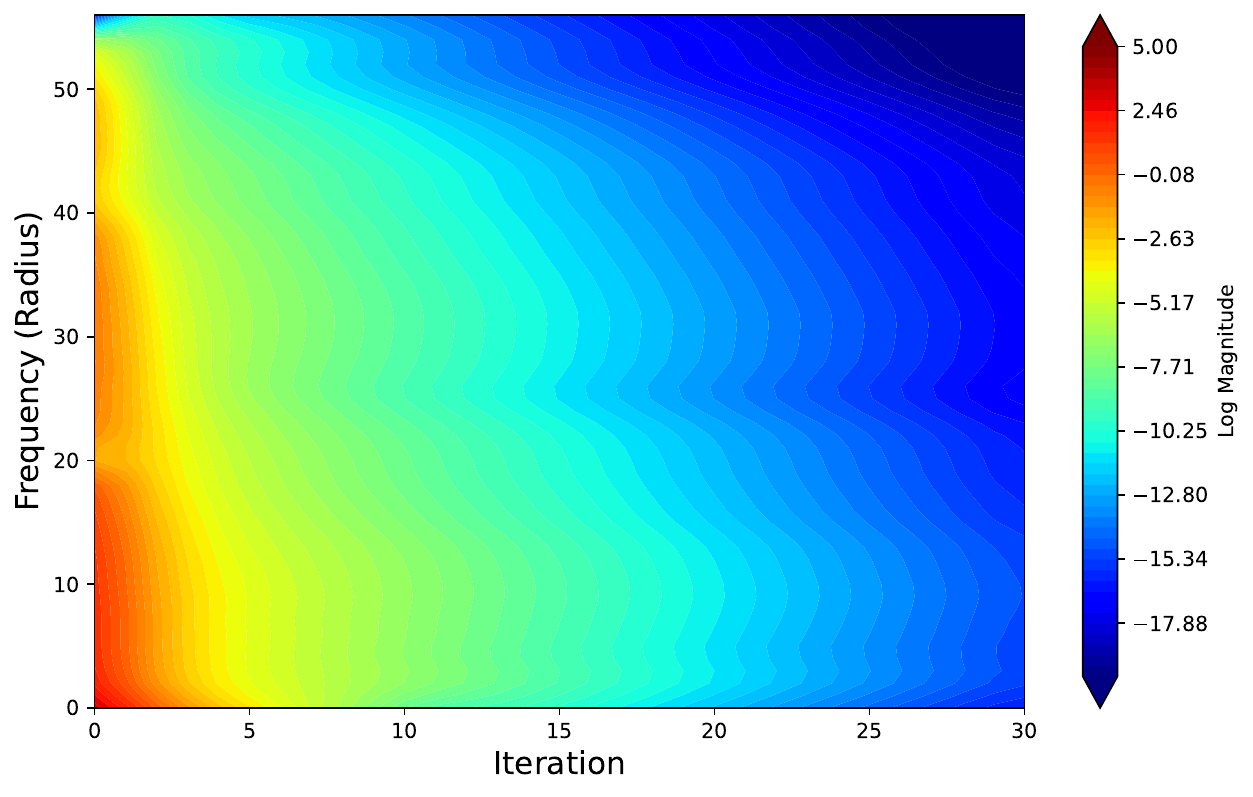}}
\\
\subfloat[LMG-RPS - Field I]{\includegraphics[width=0.31\textwidth]{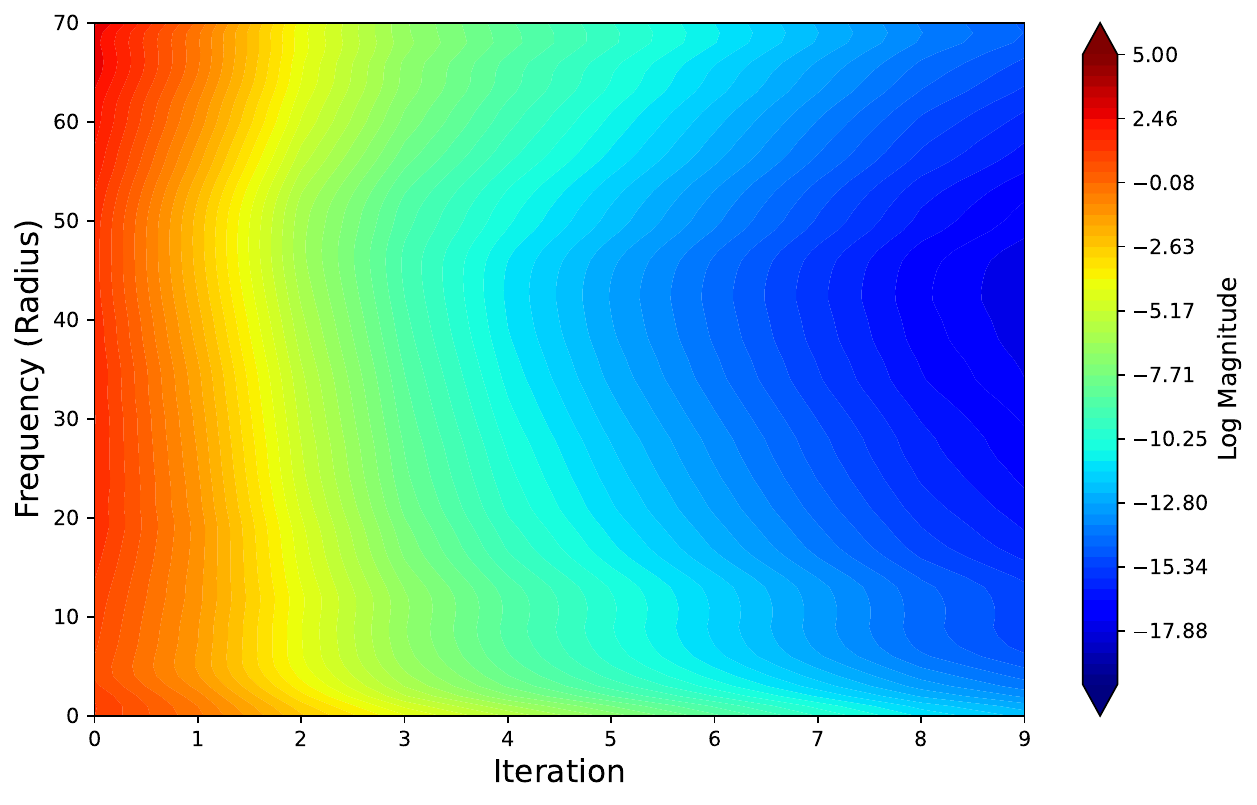}}
\hfill
\subfloat[LMG-RPS - Field II]{\includegraphics[width=0.31\textwidth]{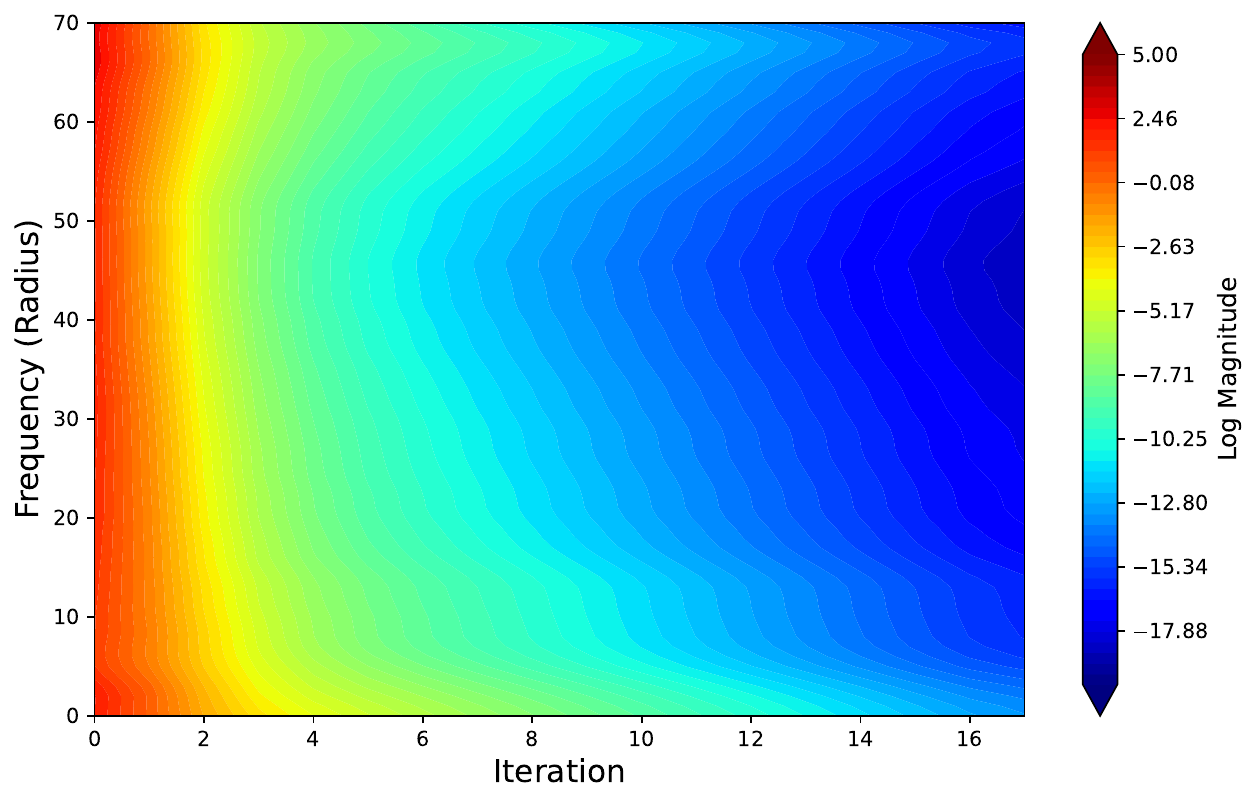}}
\hfill
\subfloat[LMG-RPS - Field III]{\includegraphics[width=0.31\textwidth]{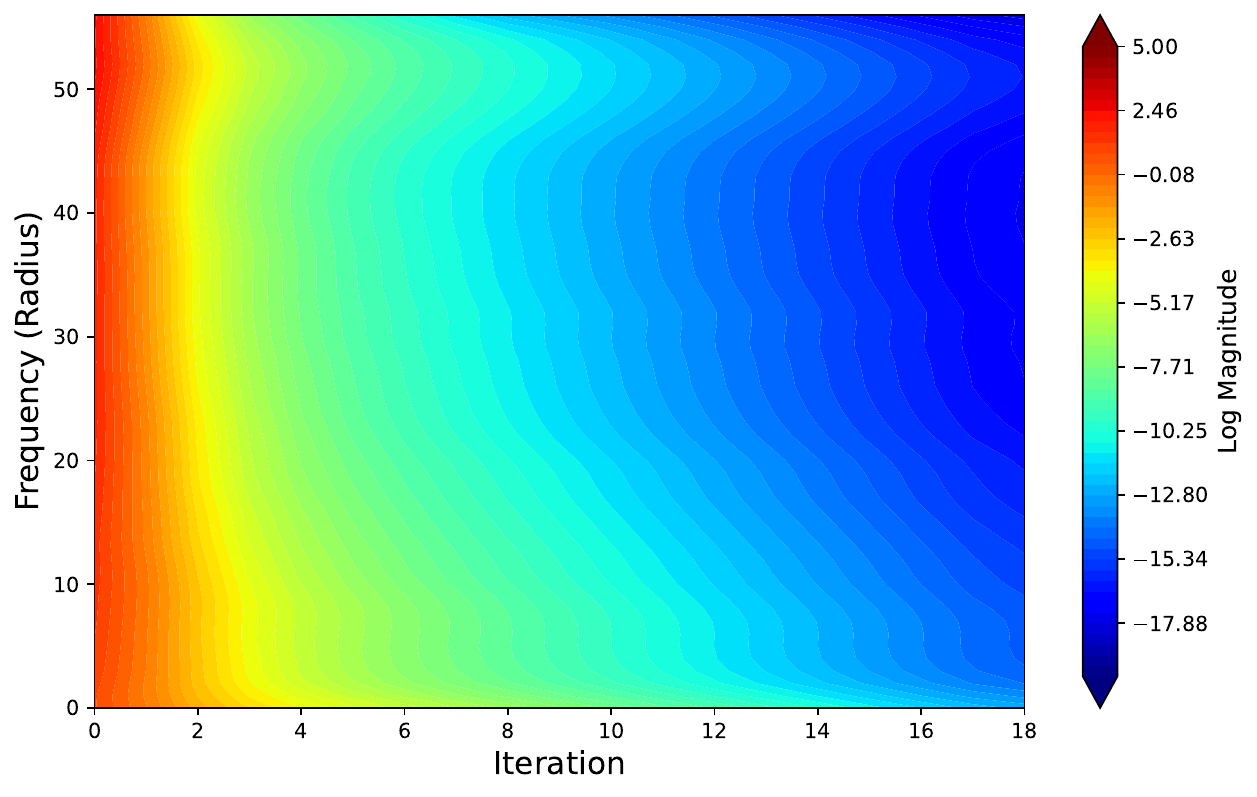}}
\\
\subfloat[LMG-RP - Field I]{\includegraphics[width=0.31\textwidth]{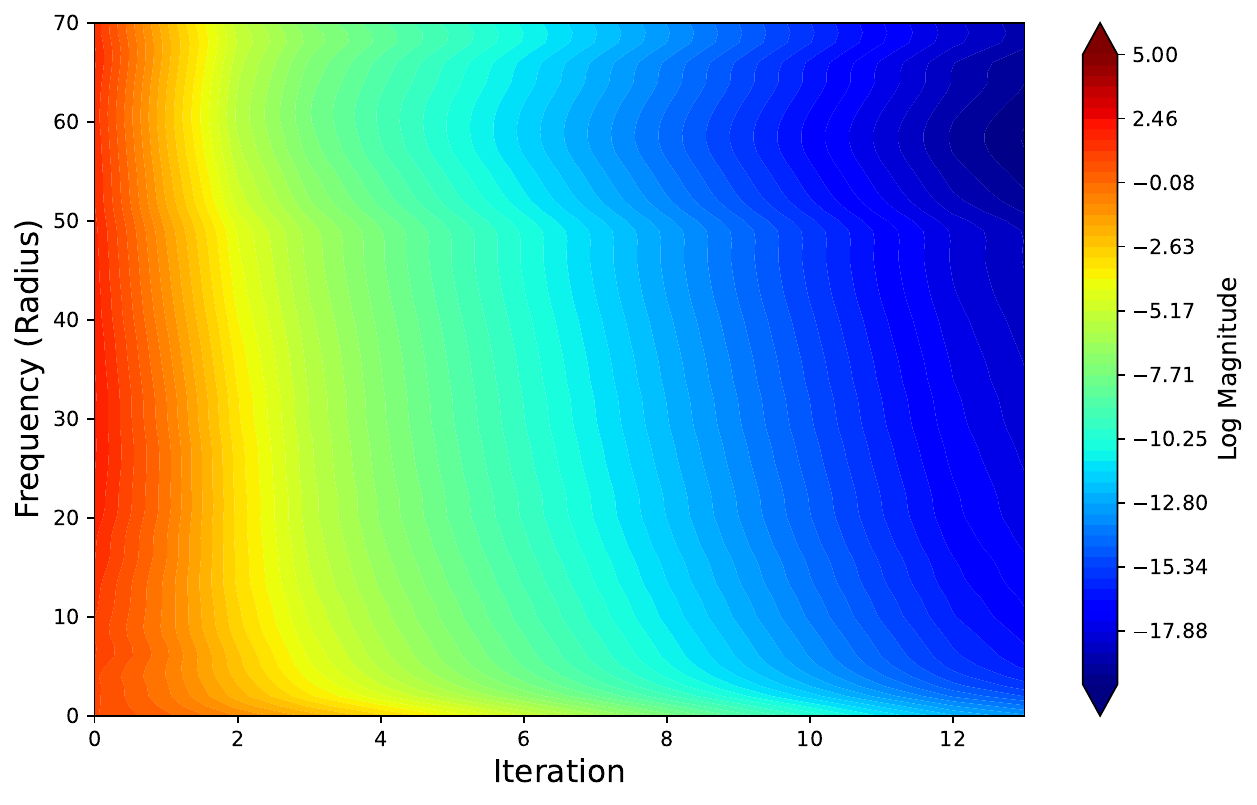}}
\hfill
\subfloat[LMG-RP - Field II]{\includegraphics[width=0.31\textwidth]{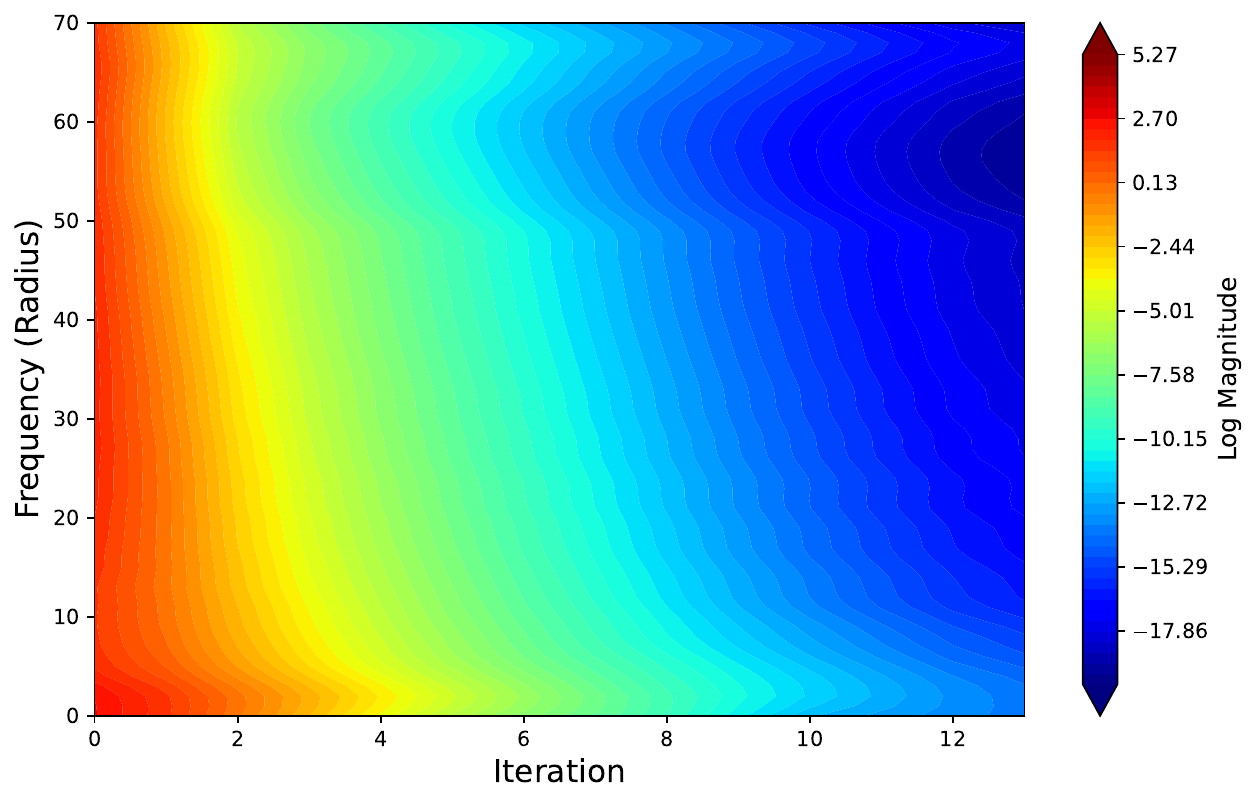}}
\hfill
\subfloat[LMG-RP - Field III]{\includegraphics[width=0.31\textwidth]{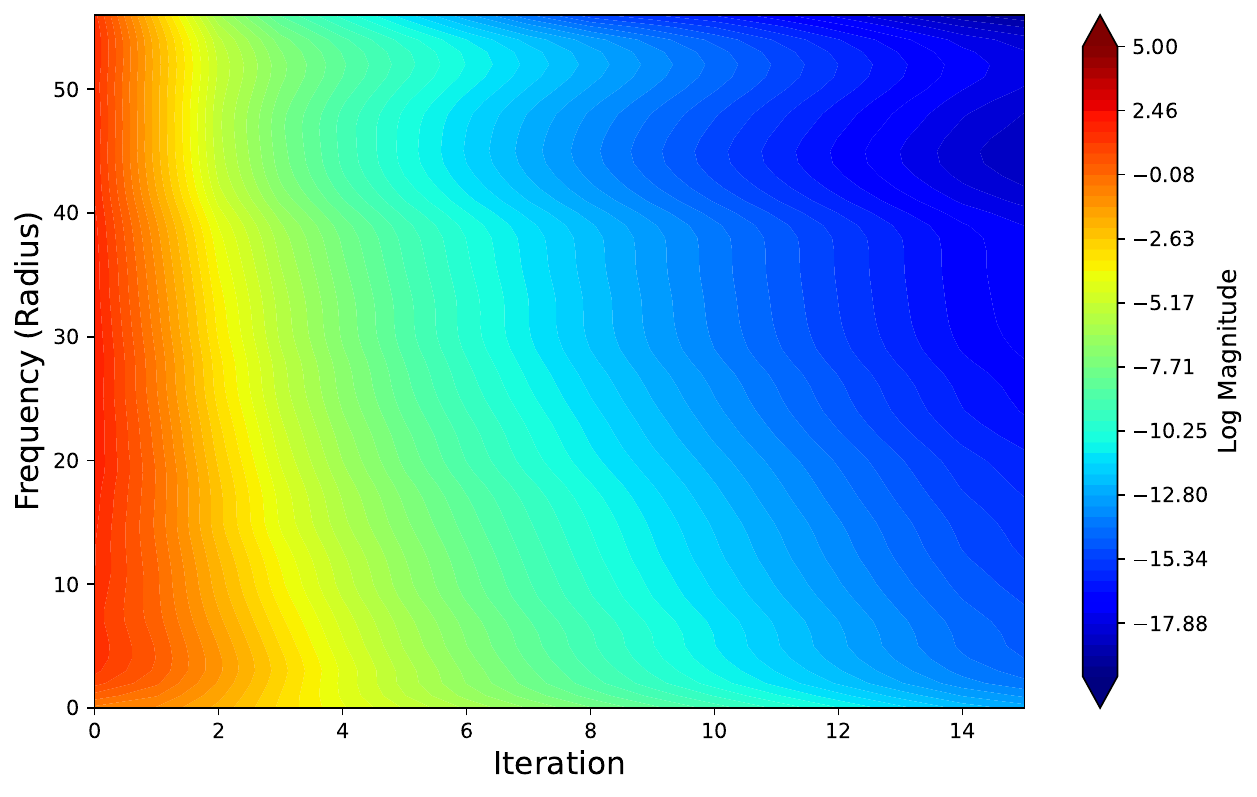}}
\caption{The frequency evolution of the residual for the classical multigrid solver (top row), the LMG-RPS solver (mid row), and the LMG-RP solver (bottom row) for the Darcy equation \cref{eq:darcy} with the three testing conductivity fields.}
\label{fig:darcy-error-frequency}
\end{figure}
For the residual $r^m$ at iteration $m$, we reshape it as a 2D array and apply the two-dimensional fast Fourier transform (FFT):
\begin{equation}
\widehat r^m_{pq}=\left|\mathrm{FFT}(r^m)_{pq}\right|, \quad
E_m(\rho)=\frac{1}{|\mathcal{S}_\rho|}\sum_{(p,q)\in\mathcal{S}_\rho}\widehat r^m_{pq},
\end{equation}
where $\mathcal{S}_\rho=\{(p,q):\operatorname{round}(\sqrt{p^2+q^2})=\rho\}$.
The y-axis is the frequency radius $\rho$, and the color shows $\log E_m(\rho)$.

\smallskip
The contours show the expected behavior of classical multigrid: high-frequency errors decay first, while some low-frequency components persist.
LMG-RPS and LMG-RP reduce both ranges more uniformly, especially near frequency radius 40--50, which is consistent with the frequency-dependent damping described in \cref{subsec:spectral-analysis}.

\subsubsection{Sensitivity to the initialization}
We also investigate the sensitivity of the training process to the initialization of the learnable parameters.
We tested a fully learnable model $\{\Theta_R,\Theta_P,\Theta_{B,\ell},\eta_\ell,\theta_\ell\}$ with randomly initialized graph convolutions, very small scaling factors $\eta_\ell=\theta_\ell=10^{-8}$, and learning rate $10^{-4}$.
The loss evolution is shown in \cref{fig:darcy-loss-random-init}.
\begin{figure}[htbp]
\centering
\includegraphics[width=0.48\textwidth]{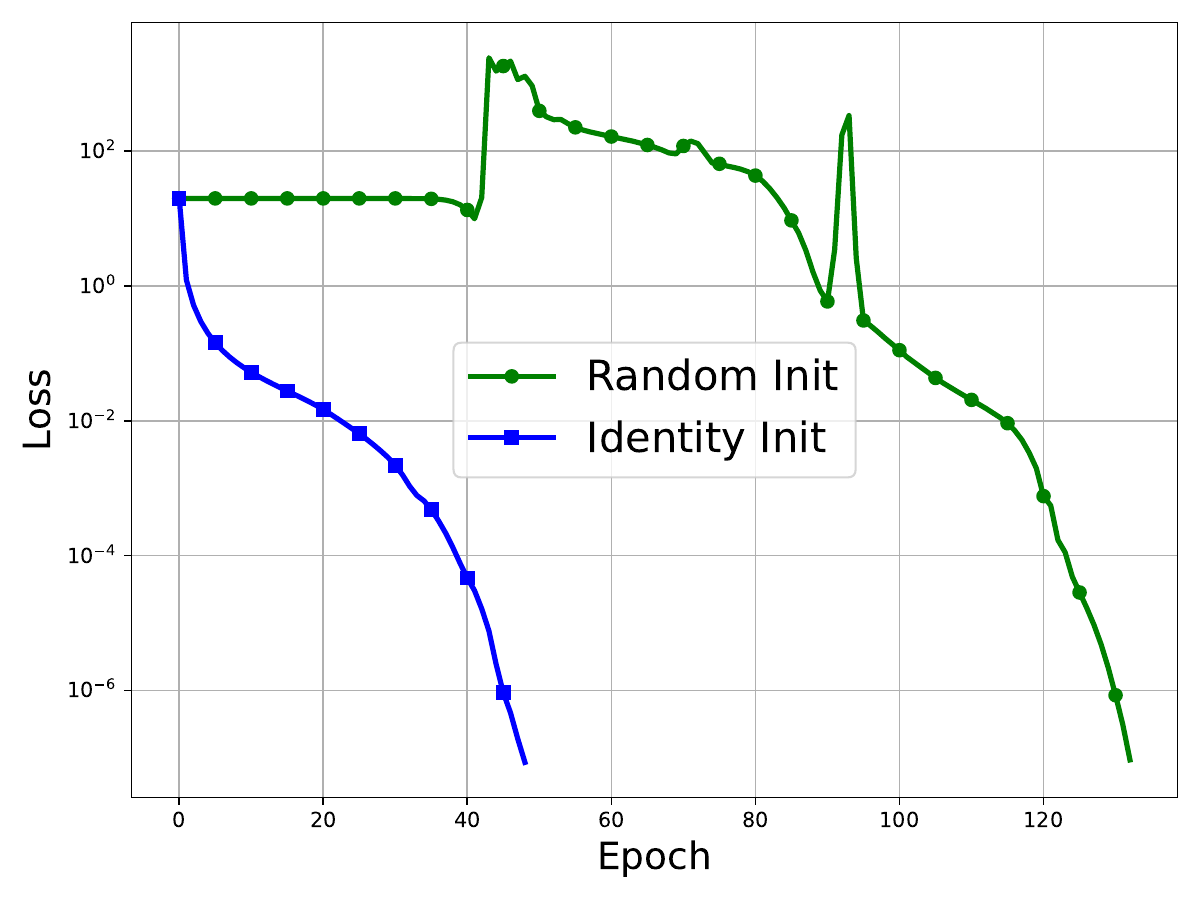}
\caption{The loss evolution during the training process for the Darcy equation \cref{eq:darcy} with random initialization and identity initialization of the learnable parameters with the same learning rate.}
\label{fig:darcy-loss-random-init}
\end{figure}
Although one run converged after about 130 epochs, its training process was less stable than the identity-initialized model.
The generalization ability of the random-initialized model is also limited, with iteration counts of 7 and 12 for Fields I and III, respectively.
For Field II, the random-initialized model diverged.
We notice that the random initialization has less iterations for Field I and III than the identity initialization.
This is possibly due to the larger parameter space explored by the random initialization.

\smallskip
However, this approach is less stable and can lead to divergence, as observed in Field II.
Several other random initializations diverged, and in a further test only Field II converged within more than 200 iterations.
This confirms the advantage of the identity initialization, which starts from a stable classical multigrid iteration.

\subsection{Helmholtz equation}\label{subsec:helmholtz}
Then, we consider the Helmholtz equation with the impedance boundary condition as our second model problem:
\begin{equation}\label{eq:helmholtz}
	\begin{aligned}
		&-\Delta u - k^2 u = f, \quad \text{in } \Omega, \\
		&\frac{\partial u}{\partial n} - ik u = 0, \quad \text{on } \partial \Omega,
	\end{aligned}
\end{equation}
where $k$ is the wave number, and $i$ is the imaginary unit.

\smallskip
In general, the wave number $k$ and the mesh size $h$ should satisfy the condition $k h \leq C$ to reduce the pollution effect.
We discretize the domain with nonuniform triangular meshes generated by pygmsh \cite{geuzaine_gmsh_2009} and use geometric coarsening.
The prolongation operators $P_\ell$ are built by barycentric interpolation, and $R_\ell=P_\ell^T$.
We follow the setting in \cite{elman_multigrid_2001} and 
set the smoother as the Jacobi method with one step of pre-smoothing and post-smoothing when $kh_\ell \leq 0.5$ 
and the GMRES smoother with one step of pre-smoothing and two steps of post-smoothing when $kh_\ell > 0.5$.
We fix the number of inner iterations for the GMRES smoother as 4.
The classical multigrid for comparison in this experiment uses the same settings.

\smallskip
The computational domain is set as a unit square with its center located at the origin.
We use smoothed disk source terms for training and testing.
Let $\boldsymbol{x}=(x,y)^T$, $r_s=0.02$, $\varepsilon_s=0.001$, $\boldsymbol{c}_0=(0,0)^T$, and $\boldsymbol{c}_{\pm}=(0,\pm 0.25)^T$.
The training source has one center at the origin, while the testing source has two centers:
\begin{align}
f_{\mathrm{train}}(\boldsymbol{x})
&=100\left(1-\tanh\left(\frac{\|\boldsymbol{x}-\boldsymbol{c}_0\|_2^2-r_s^2}{\varepsilon_s}\right)\right), \label{eq:helmholtz-training-source}\\
f_{\mathrm{test}}(\boldsymbol{x})
&=100\left(1-\prod_{\boldsymbol{c}\in\{\boldsymbol{c}_{+},\boldsymbol{c}_{-}\}}
\tanh\left(\frac{\|\boldsymbol{x}-\boldsymbol{c}\|_2^2-r_s^2}{\varepsilon_s}\right)\right). \label{eq:helmholtz-testing-source}
\end{align}

\smallskip
To accommodate complex-valued solutions inherent to the problem, the input and output feature dimensions are set to 2 in our numerical experiments.
The weight matrices $W_1$ and $W_2$ are initialized as:
\begin{equation*}
	W_1 = \begin{bmatrix} 1 & 0 \\ 0 & 1 \end{bmatrix}, \quad
	W_2 = \begin{bmatrix} 0 & 0 \\ 0 & 0 \end{bmatrix}.
\end{equation*}
The off-diagonal entries are trainable, so the graph convolutions can mix real and imaginary parts.
All smoother graph convolutions $\Phi_{B,\ell}$ are trained, while $\Phi_R$ and $\Phi_P$ are shared across levels.

\smallskip
We first present the training results of the proposed learnable multigrid solver as an iterative solver. 
The wave number is chosen as $16\pi$.
The mesh size is fixed as $h_{\min} = 1/256$ for all the experiments in this subsection.
Thus $kh_{\min}\approx 0.196$, similar to the setting in \cite{elman_multigrid_2001}.

\smallskip
We vary the number of levels $J$ for comparisons.
For Helmholtz problems, too many levels can hurt the classical method because coarse-grid correction becomes less effective, while in the learnable method larger $J$ also increases the number of trainable parameters.

\smallskip
The training and testing results as iterative solvers are summarized in \cref{tab:helmholtz-results-iterative}. The wall clock time for training is also reported in the table.

\begin{table}[htbp]
\centering
\small
\caption{Training epochs and inference performance as an iterative solver (V-cycle) with wall clock time (in seconds) for the Helmholtz equation \cref{eq:helmholtz} with different number of levels $J$.}
\label{tab:helmholtz-results-iterative}
\begin{tabular}{ccccc}
\toprule
	\multirow{2}{*}{$J$} & \multicolumn{2}{c}{Classical MG} & \multicolumn{2}{c}{LMG} \\
	\cmidrule(lr){2-3} \cmidrule(lr){4-5}
	& Steps & Time & \multicolumn{1}{c@{\hspace{20pt}}}{\hspace{20pt}Steps} & Time \\
\midrule
	\multicolumn{5}{l}{\textit{LMG strategy: partial weight transfer}} \\
	\midrule
	5 & 19 & 1.97 & 14 & 1.67 \\
	6 & 20 & 2.12 & 14 & 1.71 \\
	7 & 20 & 2.18 & 14 & 1.77 \\
	8 & 20 & 2.29 & 14 & 1.86 \\
\bottomrule
\end{tabular}
\end{table}

Training converges in about 40 epochs for all $J$.
Classical multigrid is effective for $J=5$ but degrades for deeper hierarchies, whereas the learnable solver remains robust and improves slightly as $J$ increases.

\smallskip
The inference time of one V-cycle of the learnable multigrid solver is around 1.5 times that of the classical multigrid solver due to the additional graph convolution operations.
The trained model can decrease the iteration counts, but the total computational time is not reduced due to the increased cost per iteration.

\smallskip
Besides using the trained model as an iterative solver,
we also utilize the trained model as preconditioners for Krylov subspace methods for comparisons.
We combine both the learnable and classical multigrid solvers as preconditioners for the FGMRES \cite{saad1993flexible} method.

\smallskip
In this experiment, the trained model is also applied to construct the full multigrid (FMG) as preconditioners for Krylov subspace methods.
The FMG method, also known as nested iteration, starts from the coarsest grid and progressively refines the solution on finer grids. At each level, the solution from the coarser grid is interpolated to provide a good initial guess, which is then improved by one or more V-cycles. 
The parameter number of V-cycles per level is set to 1 in our experiments.
Note that the restriction and prolongation operators in the FMG method are the same as those in the V-cycle.

The maximum number of iterations for the Krylov subspace methods is set to 40.
The results are summarized in \cref{tab:helmholtz-results-precond}. 

\begin{table}[htbp]
\centering
\small
\caption{Comparison of iteration counts and wall clock time (in seconds) when used as preconditioners for FGMRES at inference. 
The -- notation indicates that the solver did not converge within the maximum number of iterations.}
\label{tab:helmholtz-results-precond}
\begin{tabular}{ccccc}
\toprule
\multirow{2}{*}{$J$} & \multicolumn{2}{c}{V-cycle preconditioner} & \multicolumn{2}{c}{FMG preconditioner} \\ 
\cmidrule(lr){2-3} \cmidrule(lr){4-5}
 & Classical MG& Learnable MG & Classical MG & Learnable MG \\ 
\midrule
5 & 37 (1.51) & 25 (1.41) & 11 (0.85) & 9 (0.87) \\
6 & -  & 22 (1.29) & 19 (1.54) & 8 (0.83) \\
7 & - & 21 (1.20) & 18 (1.57) & 8 (0.87) \\
8 & - & 21 (1.25) & 18 (1.59) & 8 (0.90) \\
\bottomrule
\end{tabular}
\end{table}

\Cref{tab:helmholtz-results-precond} shows the same trend: the classical V-cycle preconditioner fails for $J\ge6$, while the learnable V-cycle remains effective.
With FMG, the learnable preconditioner gives the best wall clock times for all reported $J$.

\smallskip
\Cref{fig:helmholtz-transfer-overview} summarizes the pre-training and weight-transfer strategies used for the Helmholtz problem in the next three subsections.
It also previews the experiment settings and transfer paths analysis.

\begin{figure}[htbp]
	\centering
	\includegraphics[width=0.99\textwidth]{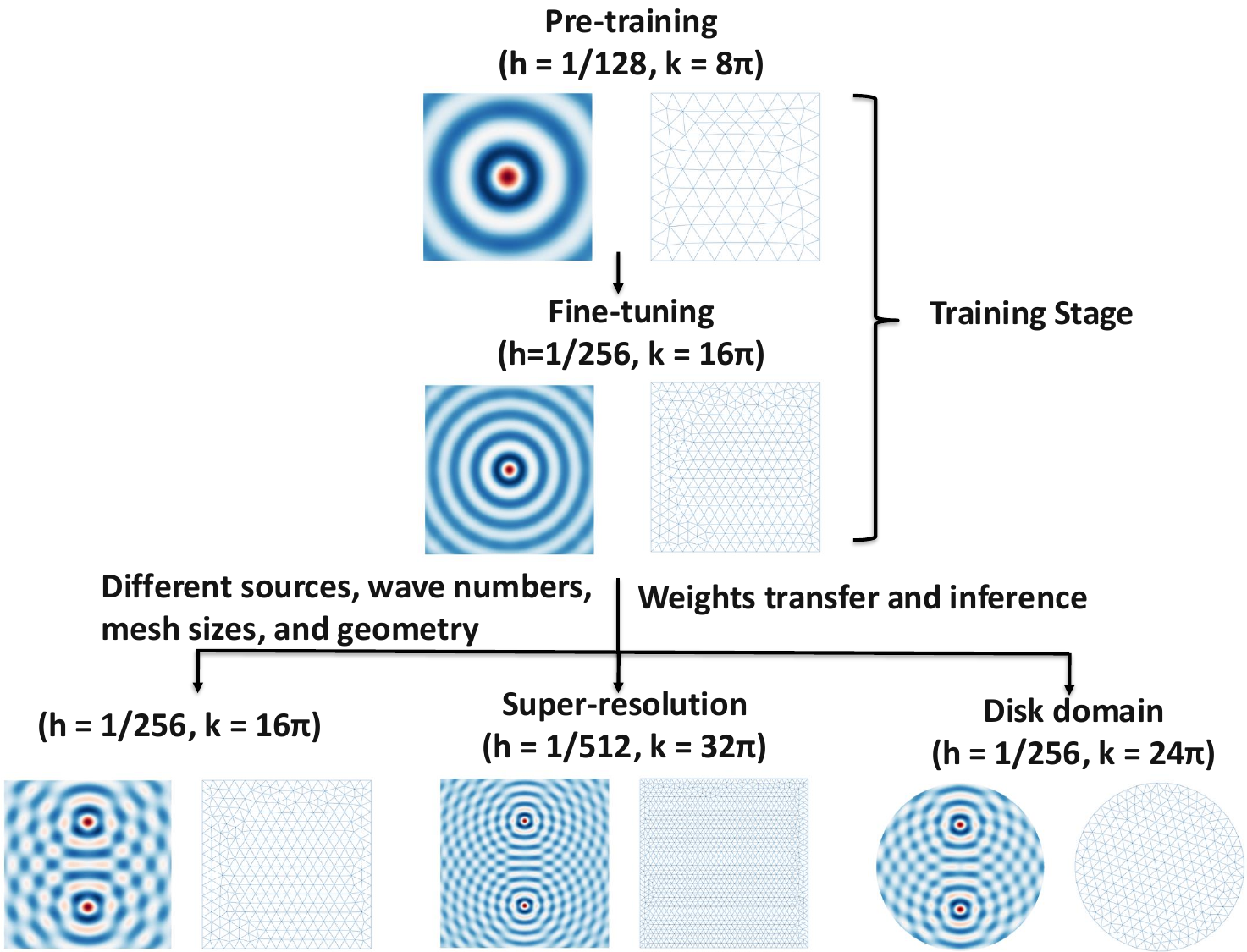}
	\caption{Illustration of the pre-training and weight transfer strategies for the Helmholtz equation. The imaginary part of the solution and the grids are shown for different wave numbers.}
	\label{fig:helmholtz-transfer-overview}
\end{figure}

\subsubsection{Performance enhancement via pre-training strategies}\label{subsec:pretraining}
For the Helmholtz equation, one-shot learning gives a smaller gain than in the Darcy case because the problem is indefinite and the model has more trainable smoother parameters.
We therefore use pre-training: train first on a smaller problem with lower wave number, then use those parameters to initialize the larger target problem.

\smallskip
We first pre-train the model on the Helmholtz equation \cref{eq:helmholtz} with a wave number $k = 8\pi$ and a mesh size $h_{\min} = \frac{1}{128}$.
The number of epochs is set to 100 with no early stopping.
Then, we use the learned parameters as the initial parameters for training on the problem with wave number $k = 16\pi$ and mesh size $h_{\min} = \frac{1}{256}$.
The trained model is then tested as a preconditioner for the FMG method combined with the FGMRES method.
The results for the FGMRES solver are summarized in \cref{tab:pretraining_comparison_gmres}.

\begin{table}[htbp]
\centering
\small
\caption{Comparison of iteration counts and wall clock time (in seconds) for $k = 16\pi, h_{\min} = \frac{1}{256}$ across different levels $J$, with the FMG preconditioner.
Pre-training means that the weights are transferred from the problem with $k = 8\pi, h_{\min} = \frac{1}{128}$, then used to initialize the model.}
\label{tab:pretraining_comparison_gmres}
\begin{tabular}{c cc c@{\hspace{0pt}}c c@{\hspace{0pt}}c}
	\toprule
	\multirow{2}{*}{$J$} & \multicolumn{2}{c}{Classical MG} & \multicolumn{2}{c}{Learnable MG (No pre-train)} & \multicolumn{2}{c}{Learnable MG (Pre-train)} \\
	\cmidrule(lr){2-3} \cmidrule(lr){4-5} \cmidrule(lr){6-7}
	& Steps & Time & \multicolumn{1}{c@{\hspace{20pt}}}{\hspace{20pt}Steps} & Time & \multicolumn{1}{c@{\hspace{20pt}}}{\hspace{20pt}Steps} & Time \\
	\midrule
	5 & 11 & 0.85 & 9 & 0.87 & 7 & 0.70 \\
	6 & 19 & 1.54 & 8 & 0.83 & 8 & 0.82 \\
	7 & 18 & 1.57 & 8 & 0.87 & 8 & 0.87 \\
	8 & 18 & 1.59 & 8 & 0.90 & 6 & 0.69 \\
	\bottomrule
\end{tabular}
\end{table}

The pre-trained model improves the FMG-preconditioned FGMRES results at low additional cost.
Although not every pre-training run improves every case, it is beneficial overall; for example, at $J=5$ the learnable solver outperforms the classical solver only after pre-training.

\subsubsection{Zero-shot super-resolution via weight transfer}
To test the generalization ability further,
we load the weights of the previous model trained in \cref{subsec:pretraining} on a higher wave number problem directly without fine-tuning.
The wave number is set to $k = 32\pi$ and the mesh size is $h_{\min} = \frac{1}{512}$.
We compare the performance of the learnable multigrid solver with the classical multigrid solver as preconditioners for the FMG method combined with the FGMRES method.
The results are summarized in \cref{tab:full_weight_transfer_comparison}.

\begin{table}[htbp]
\centering
\small
\caption{Comparison of iteration counts and wall clock time (in seconds) with weight transfer for $k = 32\pi, h_{\min} = \frac{1}{512}$ across different levels $J$, 
with the FMG preconditioner. The weights are loaded from the pre-trained model with $k = 16\pi, h_{\min} = \frac{1}{256}$ from \cref{subsec:pretraining}.
The -- notation indicates that the solver did not converge within the maximum number of iterations.}
\label{tab:full_weight_transfer_comparison}
\begin{tabular}{ccccc}
	\toprule
	\multirow{2}{*}{$J$} & \multicolumn{2}{c}{Classical MG} & \multicolumn{2}{c}{Learnable MG} \\
	\cmidrule(lr){2-3} \cmidrule(lr){4-5}
	& Steps & Time & \multicolumn{1}{c@{\hspace{20pt}}}{\hspace{20pt}Steps} & Time \\
	\midrule
	5 & 25 & 6.39 & 14 & 4.61 \\
	6 & - & - & 17 & 5.99 \\
	7 & - & - & 19 & 6.57 \\
	8 & - & - & 11 & 3.96 \\
	\bottomrule
\end{tabular}
\end{table}

\Cref{tab:full_weight_transfer_comparison} shows that weight transfer also generalizes well: the learnable preconditioner converges in all cases and significantly outperforms the classical one.
The best result occurs at $J=8$, suggesting that the additional transferred parameters can help at the higher wave number.

\subsubsection{The partial weight transfer strategy for generalization to different wave numbers, geometries, and the number of levels}
Transferring weights across wave numbers, geometries, and level counts requires handling two mismatches: the smoother type at a level may change because of the condition $kh_\ell\le0.5$, and the target hierarchy may have more levels.
Our partial transfer rule is simple.
Levels with matching smoother type receive the corresponding pre-trained weights; levels with mismatched type use the classical non-learnable smoother; extra coarse levels copy the deepest compatible source weights; the shared $\Phi_R$ and $\Phi_P$ are always transferred.

\smallskip
In this subsection, we set the target domain as a unit disk centered at the origin with radius $0.5$.
The wave number is set to $k = 24\pi$ and the mesh size is $h_{\min} = \frac{1}{256}$.
The weights are loaded from the pre-trained model with $k = 16\pi$, $h_{\min} = \frac{1}{256}$ and $J = 5$,
without any fine-tuning on the target problem.

\smallskip
The two mismatches that arise when transferring the pre-trained model ($J=5$, $k=16\pi$) to a deeper, higher-wave-number target ($J=6$, $k=24\pi$) are illustrated in \cref{tab:partial_transfer_rule}.
First, because the wave number increases while the mesh size is fixed, the threshold $kh_\ell \leq 0.5$ is crossed at a finer level: 
the learned Jacobi smoother at source level $\ell = 2$ no longer matches the target, where a GMRES smoother is required.
A direct copy of the learned Jacobi weights into a GMRES smoother is therefore not meaningful.
Second, the target hierarchy has one extra coarse level that has no counterpart in the source model.
Our partial transfer rule resolves both issues without any fine-tuning: levels whose smoother type matches receive the corresponding learned weights, the mismatched level falls back to the classical non-learnable GMRES smoother, the extra coarse level copies the deepest compatible source weights, and the shared $\Phi_R$ and $\Phi_P$ are always transferred.

\begin{table}[htbp]
\centering
\small
\caption{Illustration of the partial weight transfer strategy when transferring the pre-trained model ($J=5$, $k=16\pi$, $h_{\min}=\frac{1}{256}$) from \cref{subsec:pretraining} to the target problem ($J=6$, $k=24\pi$, $h_{\min}=\frac{1}{256}$). Level $\ell = 1$ is the finest grid, and the smoother type at each level follows the condition $kh_\ell \leq 0.5$. 
The shared convolutions $\Phi_R$ and $\Phi_P$ are always transferred.}
\label{tab:partial_transfer_rule}
\begin{tabular}{c cc cc l}
\toprule
\multirow{2}{*}{$\ell$} & \multicolumn{2}{c}{Source} & \multicolumn{2}{c}{Target} & \multirow{2}{*}{Transfer action} \\
\cmidrule(lr){2-3} \cmidrule(lr){4-5}
 & $kh_\ell$ & Smoother & $kh_\ell$ & Smoother & \\
\midrule
1 & 0.20 & Jacobi\ (learned) & 0.30 & Jacobi\ & Transfer weights \\
2 & 0.39 & \textbf{Jacobi}\ (learned) & 0.59 & \textbf{GMRES} & Use classical GMRES \\
3 & 0.79 & GMRES (learned) & 1.18 & GMRES & Transfer weights \\
4 & 1.57 & GMRES (learned) & 2.36 & GMRES & Transfer weights \\
5 & 3.14 & GMRES (learned) & 4.71 & GMRES & Transfer weights \\
6 & --   & \textbf{--}              & 9.42 & \textbf{GMRES} & Copy from $\ell=5$ \\
\bottomrule
\end{tabular}
\end{table}

\smallskip
The inference results with partial weight transfer are summarized in \cref{tab:circle_weight_transfer_comparison}.
It shows that partial transfer improves both iteration counts and wall clock time for all $J$.
The nearly constant iteration counts indicate that the transfer rule handles level mismatch effectively.

\begin{table}[htbp]
\centering
\small
\caption{Comparison of iteration counts and wall clock time (in seconds) with partial weight transfer for $k = 24\pi, h_{\min} = \frac{1}{256}$ across different levels $J$, with the FMG preconditioner.
The weights are loaded from the pre-trained model with $k = 16\pi, h_{\min} = \frac{1}{256}$ and \textbf{$J = 5$} from \cref{subsec:pretraining} by using the partial weight transfer strategy.}
\label{tab:circle_weight_transfer_comparison}
\begin{tabular}{ccccc}
	\toprule
	\multirow{2}{*}{$J$} & \multicolumn{2}{c}{Classical MG} & \multicolumn{2}{c}{Learnable MG} \\
	\cmidrule(lr){2-3} \cmidrule(lr){4-5}
	& Steps & Time & \multicolumn{1}{c@{\hspace{20pt}}}{\hspace{20pt}Steps} & Time \\
	\midrule
	5 & 19 & 1.97 & 14 & 1.67 \\
	6 & 20 & 2.12 & 14 & 1.71 \\
	7 & 20 & 2.18 & 14 & 1.77 \\
	8 & 20 & 2.29 & 14 & 1.86 \\
	\bottomrule
\end{tabular}
\end{table}

\section{Conclusion}
\label{sec:conclusion}
In this paper, we proposed a learnable multigrid framework using graph convolutions.
The learnable multigrid components, including the smoother, restriction, and prolongation operators,
are enhanced with graph convolution operators to improve the overall performance of the multigrid solver.
The framework is general for both geometric and algebraic multigrid methods.

\smallskip
The proposed learnable multigrid solver is lightweight in terms of the number of learnable parameters.
The graph convolution operators used in the experiments have only a few learnable parameters.
The graph convolution operators are embedded into the multigrid components,
not replacing them entirely, nor combining them into a black-box neural network.
This ensures that the learnable multigrid solver retains the interpretability and theoretical foundation of the classical multigrid method.

\smallskip
We conduct numerical experiments on both the Darcy equation with heterogeneous coefficients and the Helmholtz equation with impedance boundary conditions to demonstrate the effectiveness of the proposed learnable multigrid solver.
For the Darcy equation, the method shows strong generalization capabilities across different conductivity fields and mesh sizes.
For the Helmholtz equation, which is challenging due to its indefiniteness and oscillatory nature,
the results demonstrate that the learnable multigrid solver improves convergence compared to classical multigrid methods.
Furthermore, we introduce pre-training and weight transfer strategies to enhance scalability and generalization to different wave numbers, geometries, and mesh sizes.
These strategies allow the model to be effectively applied to larger-scale problems and higher wave numbers with reduced training costs.

\smallskip 
Possible future directions include exploring more advanced graph convolution operators tailored for PDEs,
investigating adaptive strategies for selecting smoothing steps based on problem characteristics,
multi-stage pre-training strategies to enhance generalization across a wider range of problems.


\bibliographystyle{siamplain}
\bibliography{references}

\end{document}